 \font\sr = msbm10 scaled \magstep 1
\newcommand{\Q}{\mbox{\sr Q}}
\newcommand{\R}{\mbox{\sr R}}
\newtheorem{thm}{Theorem}[section]
\newtheorem{lemma}[thm]{Lemma}
\newtheorem{defn}[thm]{Definition}
\newtheorem{nota}[thm]{Notation}
\newtheorem{rmk}[thm]{Remark}
\newtheorem{coro}[thm]{Corollary}
\newtheorem{examp}[thm]{Example}
\newtheorem{prop}[thm]{Proposition}
\newtheorem{assumptions}[thm]{Assumption}
\newcommand{\bpr}{\begin{proof}}
\newcommand{\epr}{\end{proof}}
\newcommand{\bt}{\begin{thm}}
\newcommand{\et}{\end{thm}}
\newcommand{\bl}{\begin{lemma}}
\newcommand{\el}{\end{lemma}}
\newcommand{\br}{\begin{rmk}}
\newcommand{\er}{\end{rmk}}
\newcommand{\bc}{\begin{coro}}
\newcommand{\ec}{\end{coro}}
\newcommand{\bd}{\begin{defn}}
\newcommand{\ed}{\end{defn}}
\newcommand{\bn}{\begin{nota}}
\newcommand{\en}{\end{nota}}
\newcommand{\bex}{\begin{examp}}
\newcommand{\eex}{\end{examp}}
\newcommand{\bp}{\begin{prop}}
\newcommand{\ep}{\end{prop}}
\newcommand{\ba}{\begin{assumptions}}
\newcommand{\ea}{\end{assumptions}}
\newcommand{\beq}{\begin{equation}}
\newcommand{\eeq}{\end{equation}}
\newcommand{\ds}{\displaystyle}
\begin{document}
\renewcommand{\baselinestretch}{0.9}

\date{\today}
{

%\normalsize

\title[Strings with moving ends]{A remark on observability of the wave equation with moving boundary}

\author{Ka\"{\i}s Ammari}
\address{UR Analysis and Control of PDEs, UR13ES64, Department of Mathematics, Faculty of Sciences of Monastir,
University of Monastir, 5019 Monastir, Tunisia}
\email{kais.ammari@fsm.rnu.tn}

\author{Ahmed Bchatnia}
\address{Department of
Mathematics, Faculty of Sciences of Tunis, University of Tunis El
Manar, Tunisia} \email{ahmed.bchatnia@fst.rnu.tn}

\author{Karim El Mufti}
\address{UR Analysis and Control of PDEs, UR13ES64, ISCAE, University of Manouba, Tunisia}
\email{karim.elmufti@iscae.rnu.tn}

\begin{abstract}
We deal with the wave equation with assigned moving boundary
($0<x<a(t)$) upon which Dirichlet or mixed boundary conditions are
specified, here $a(t)$ is assumed to move slower than the light and
periodically. Moreover $a$ is continuous, piecewise linear with two
independent parameters. Our major concern will be an observation
problem which is based measuring, at each $t>0$ of the transverse
velocity at $a(t)$. The key to the results is the use of a reduction
theorem \cite{Yoc}.
\end{abstract}
\date{}
\subjclass[2010]{35L05, 34K35, 93B07, 95B05} \keywords{Strings with
moving ends, observability, rotation number}

\maketitle

\tableofcontents

%\vfill\break

\section{Introduction and main results}
We consider the following problems:
\begin{equation}\label{B.Khiar}
\left\{\begin{array}{lll} u_{tt}-u_{xx}=0 \;\;\;\;
\mbox{for} \;\;\;\; 0<x<a(t), \, t > 0,\\
u(x,0)=\phi(x), u_t(x,0)=\psi(x), \;\; 0<x<a(0), \\
\end{array}\right.
\end{equation}
$(\phi,\psi)\in H^1((0,a(0)))\times L^2((0,a(0)))$, with Dirichlet
boundary conditions
\begin{equation}\label{dir}
u(0,t)=0 \;\;\;\; \mbox{and} \;\;\;\; u(a(t),t)= 0, \, t > 0,
\end{equation}
or with mixed boundary conditions for which (\ref{dir}) is replaced
by
\begin{equation}\label{neu}
u(0,t)=0 \;\;\;\; \mbox{and} \;\;\;\; u_x(a(t),t)=0, \, t > 0,
\end{equation}
the subscripts denote partial differentiations, here $a$ is a
strictly positive real function which is continuous, periodic,
piecewise linear.\\
Our major concern will
be to find the associated curves $a(t)$ for which,
\begin{equation}\label{obd}
\int_{0}^{T} \left|u_x(a(t),t)\right|^2 \, dt \geq C^{*} \,
\left(\|\phi\|^2_{H^1_0(0,a(0))} + \|\psi\|^2_ {L^{2}(0,a(0))} \right),
\end{equation}
for (\ref{B.Khiar})-(\ref{dir}) and
\begin{equation}\label{obn}
\int_{0}^{T} \left|u_t(a(t),t)\right|^2 \, dt \geq C^{*} \,
\left(\|\phi\|^2_{H^1_l(0,a(0))} + \|\psi\|^2_ {L^{2}(0,a(0))} \right),
\end{equation}
for (\ref{B.Khiar})-(\ref{neu}) are valid,  where $H_l^1(0,a(0)) = \left\{ f \in
H^1(0,a(0)) \mbox{ such that } f(0)=0\right\}.$ 
Note that if $a$ is a constant, the
observability inequality 
\begin{equation}\label{Obser1}
\int_{0}^{T} \left|u_x(a,t)\right|^2 \, dt \geq C^{*} \,
\left(\|\phi\|^2_{H^1_0(0,a)} + \|\psi\|^2_ {L^{2}(0,a)} \right)\;
\mbox{for some positive constant} \; C^{*},
\end{equation}
holds if $T\geq 2a$ for the Dirichlet problem. Also,
\begin{equation}\label{Obser2}
\int_{0}^{T} \left|u_t(a,t)\right|^2 \, dt \geq C^{*} \,
\left(\|\phi\|^2_{H^1_l(0,a)} + \|\psi\|^2_ {L^{2}(0,a)} \right),
\end{equation}
holds for the mixed problem.
\bigskip 

In \cite{C} the author consider the system
\begin{equation*}
\left\{\begin{array}{lll} \varphi_{tt}-\varphi_{xx}=0 \;\;\;\;
\mbox{for} \;\;\;\; 0<x<1, \, t > 0,\\
\varphi(0,t) = \varphi(1,t) = 0, \, t > 0, \\
\varphi(x,0)=\phi(x), \varphi_t(x,0)=\psi(x), \;\; 0<x<1. \\
\end{array}\right.
\end{equation*}

For a suitable class of curves $a(t)$, which are $a : [0,T] \rightarrow (0,L)$ in the class $C^1([0, T])$ piecewise, i.e. there exists a partition of $[0, T], 0 = t_0 < t_1 < \cdots < t_n = T,$ such that $a\in C^1([t_i, t_{i+1}])$ for all $i = 0, \cdots, n-1.$ Assume also that this partition can be chosen in such a way that $1-|a'(t)|$ does not change the sign in $t\in   [t_i, t_{i+1}],$ for all $i = 0, 1, \cdots , n-1.$ Also he makes the following hypothesis:
\begin{enumerate}
\item There exists constants $c_1, c_2 > 0$ and a finite number of open subintervals $I_j \subset [0, T]$ with
$j = 0, \cdots , J$ such that, for each subinterval $I_j$, $a\in C^1(I_j)$ and it satisfies the following two
conditions :\\
$\bullet$ $c_1\leq |a'(t)| \leq c_2$ for all $t \in I_j$,\\
$\bullet$ $1 -|a'(t)|$ does not change the sign in $t\in I_j$. \\
We assume, without loss of generality, that there exists $j_1$ with $-1 \leq  j_1 \leq J$ such that $a(t)$ is decreasing in $I_j$ for $0 \leq j \leq j_1$, and $a(t)$ is increasing in $I_j$ for $j_1 < j \leq J.$\\
The case $j_1 = -1$ corresponds to that where $a(t)$ is increasing in all the subintervals $I_j$. Analogously, $j_1 = J$ corresponds to the case where $a(t)$ is decreasing in all the subintervals $I_j$.
\item For each $j = 0, \cdots , J$, let $U_j$ be the subintervals defined as follows:
\begin{equation*}
U_j =  \left\{ \begin{array}{ll}  \{ s-a(s) \mbox{ with }  s\in I_j \}   \mbox{ if } j\leq j_1   \\
\{ s+a(s) \mbox{ with }  s\in I_j \}   \mbox{ if } j> j_1 . \end{array} \right.
\end{equation*}
Then, there exists an interval $W_1$ with length $(W_1) > 2L$ such that
$$
W_1 \subset \overline{\overset{J}{\underset{j=0}{\cup}}U_j.}
$$
\item For each $j = 0, \cdots , J$, let $V_j$ be the subintervals defined as follows:
\begin{equation*}
V_j =  \left\{ \begin{array}{ll}  \{ s+a(s) \mbox{ with }  s\in I_j \}   \mbox{ if } j\leq j_1   \\
\{ s-a(s) \mbox{ with }  s\in I_j \}   \mbox{ if } j> j_1 . \end{array} \right.
\end{equation*}
\end{enumerate}
He gets the following observability estimate:
\begin{equation}\label{cas}
\int_{0}^{T} \left|\frac{d}{dt} [\varphi(a(t),t)]\right|^2 \, dt \geq C^{*} \,
\left(\|\phi\|^2_{H^1_0(0,1)} + \|\psi\|^2_ {L^{2}(0,1)} \right),
\end{equation}
where $T$ is given by an optical geometric condition requiring that any ray, starting anywhere in the
domain and with any initial direction, must meet the dissipation zone before the time $T$.
Also, he gives several examples of curves for exact controllability related to the following system:
\begin{equation*}
\left\{\begin{array}{lll} u_{tt}-u_{xx}=f(t)\delta_{a(t)} \;\;\;\;
\mbox{for} \;\;\;\; 0<x<1, \, t > 0,\\
u(0,t) = u(1,t) = 0, \, t > 0, \\
u(x,0)=\phi(x), u_t(x,0)=\psi(x), \;\; 0<x<1, \\
\end{array}\right.
\end{equation*}
 via (\ref{cas}).
 
\bigskip 
Here we introduce a new approach that provides (\ref{obd}) and (\ref{obn}) for another class of curves $a(t)$ with less of regularity. 
We start with some notations and known results.  

\bigskip 
Let $\mbox{Lip} (\R)$ be the space
of Lipschitz continuous functions on $\R$. We shall denote the
Lipschitz constant of a function $F$ by
$$
L(F):=\sup_{x,y\in \R,x\neq y} \left | \frac{F(x)-F(y)}{x-y}\right
|.
$$
Furthermore, denote by $D_{p}$ the set of functions continuous and
strictly increasing of the form $x+g(x)$, where $g(x)$ is a periodic
continuous function.
\begin{prop}
Let $a$ be a periodic function. Then
\begin{equation}\label{Fexpr}
F:=(I+a)\circ (I-a)^{-1}
\end{equation}
belongs to $D_{p}$. Moreover, the rotation number $\rho(F)$ defined
by
$$
\rho(F)= \ds \lim_{n \rightarrow \infty} \frac{F^n(x)-x}{n}
$$
exists, and the limit is equal for all $x \in \R$.
\end{prop}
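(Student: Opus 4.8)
The plan is to treat the statement in two independent pieces: first that $F\in D_{p}$, and then that the rotation number exists and is $x$-independent. For the first piece I would exploit the standing hypothesis that $a$ moves slower than light, i.e. that $a\in\mathrm{Lip}(\R)$ with $L(a)<1$. This immediately makes both $I-a$ and $I+a$ strictly increasing: for $y>x$, using $|a(y)-a(x)|\le L(a)(y-x)$,
\[
(I\pm a)(y)-(I\pm a)(x)\ \ge\ (1-L(a))(y-x)\ >\ 0 .
\]
Since $a$ is periodic and continuous it is bounded, so $I-a$ is a continuous, strictly increasing surjection of $\R$ onto $\R$, hence a homeomorphism; thus $(I-a)^{-1}$ exists and is itself continuous and strictly increasing. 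As a composition of two such maps, $F=(I+a)\circ(I-a)^{-1}$ is continuous and strictly increasing.

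The property that places $F$ in $D_{p}$ is periodicity. Writing $p$ for a period of $a$, both $I\pm a$ satisfy $(I\pm a)(x+p)=(I\pm a)(x)+p$, i.e. they commute with the translation $T_{p}:x\mapsto x+p$. I would then check that commuting with $T_{p}$ is preserved both under taking inverses and under composition, so that $F(x+p)=F(x)+p$ for all $x$. Consequently $g:=F-I$ satisfies $g(x+p)=g(x)$, so $g$ is continuous and periodic and $F=I+g\in D_{p}$. Equivalently, $F$ is the lift of an orientation-preserving circle homeomorphism once the period is rescaled to $1$.

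For the existence of $\rho(F)$ I would run the classical Poincaré argument via subadditivity. Each iterate $F^{n}$ again commutes with $T_{p}$ and is increasing, so $\psi_{n}(x):=F^{n}(x)-x$ is continuous and $p$-periodic; set $b_{n}:=\max_{x}\psi_{n}(x)$ and $c_{n}:=\min_{x}\psi_{n}(x)$. From $F^{m+n}(x)-x=\big(F^{m+n}(x)-F^{n}(x)\big)+\big(F^{n}(x)-x\big)$, together with $F^{m+n}(x)-F^{n}(x)=F^{m}(y)-y$ for $y=F^{n}(x)$, one reads off $b_{m+n}\le b_{m}+b_{n}$ and $c_{m+n}\ge c_{m}+c_{n}$. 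Fekete's lemma then gives $b_{n}/n\to\rho^{+}:=\inf_{n}b_{n}/n$ and $c_{n}/n\to\rho^{-}:=\sup_{n}c_{n}/n$, both finite. Since $c_{n}\le\psi_{n}(x)\le b_{n}$ for every $x$, it suffices to prove $\rho^{+}=\rho^{-}$, and for this I would bound the oscillation of $\psi_{n}$: choosing a minimum point $x_{*}$ and a maximum point $x^{*}\in[x_{*},x_{*}+p)$ of $\psi_{n}$, monotonicity of $F^{n}$ and $F^{n}(x_{*}+p)=F^{n}(x_{*})+p$ give $0\le F^{n}(x^{*})-F^{n}(x_{*})<p$, whence
\[
0\ \le\ b_{n}-c_{n}\ =\ \big(F^{n}(x^{*})-F^{n}(x_{*})\big)-(x^{*}-x_{*})\ <\ p .
\]
Dividing by $n$ forces $\rho^{+}=\rho^{-}=:\rho$, and squeezing $c_{n}/n\le\psi_{n}(x)/n\le b_{n}/n$ yields $\lim_{n}\psi_{n}(x)/n=\rho$ for every $x$.

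The only genuinely delicate step is the uniform oscillation bound $b_{n}-c_{n}<p$: it is what simultaneously forces the two Fekete limits to coincide and makes the limit independent of $x$, and it relies essentially on $F$ being monotone and commuting with $T_{p}$. Everything else is routine once $F\in D_{p}$ has been established.
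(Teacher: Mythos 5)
Your proposal is correct and complete. Note, however, that the paper supplies no proof of this proposition at all: it is stated as a known fact, with the reader pointed to Herman's memoir (\cite{Her}, section II) for the theory of rotation numbers. What you have written is exactly the classical Poincar\'e argument that underlies that citation --- monotonicity and commutation with the period translation put $F$ in $D_p$, subadditivity of $\max_x(F^n(x)-x)$ plus Fekete's lemma give convergence, and the oscillation bound $b_n-c_n<p$ (which you correctly isolate as the one delicate step) forces the limit to be independent of $x$. All the individual steps check out: the inverse and the composition do inherit commutation with $T_p$, and the choice of a maximizer $x^*\in[x_*,x_*+p)$ together with strict monotonicity of $F^n$ does yield $0\le F^n(x^*)-F^n(x_*)<p$.

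One point worth flagging: as literally stated, the proposition assumes only that $a$ is periodic, under which $(I-a)^{-1}$ need not even exist (take $a$ continuous periodic with slope exceeding $1$ somewhere). You were right to import the standing hypothesis $a\in\mathrm{Lip}(\mathbb{R})$ with $L(a)<1$, which is stated only a few lines later in the paper; your proof makes explicit that this is where injectivity and surjectivity of $I\pm a$, and hence the whole construction, come from. In that sense your write-up is more careful than the source.
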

Rigorous studies pointing out the use of rotation numbers has led to
fruitful contributions, one of which is an elegant and important result
(see \cite[section II]{Her} for more details):

\bigskip 
Assume that $a(t)$ is a periodic function, $a(t)>0$, $a \in
\mbox{Lip}(\R)$ such that $L(a)\in [0,1)$. Assume also that
$|a'(t)|<1$ for all $t \in \R$ and $\rho (F)\in\R\setminus\Q$ such
that there exists a function $H \in D_{p}$ and
\begin{equation}\label{redF}
H^{-1} \circ F \circ H(\xi) = \xi + \rho(F).
\end{equation}
Before stating our main results, let us specify some hypotheses on
$H$.
\begin{assumptions}\label{a1}
There exist $\lambda_1>0$ and $\lambda_2>0$ such that
\begin{equation}
\lambda_1 \leq H^\prime (t) \leq \lambda_2, \, t \in \mathbb{R}.
\end{equation}
\end{assumptions}
\begin{assumptions}\label{a2}
The function $\displaystyle
b(t):=\frac{H'(a(t)+t)-H'(-a(t)+t)}{H'(a(t)+t)+H'(-a(t)+t)}$
satisfies
\begin{equation}
c_1 \leq b(t)   \leq c_2, \;\; c_1,c_2>0, \mbox{ for all } t\in \mathbb{R}.
\end{equation}
\end{assumptions}
\begin{rmk} We make less assumptions  and get on the occasion a larger class of functions $a(t)$ in connexion with the work of Castro \cite{C}.
\end{rmk}
We give an example where assumptions \ref{a1} and \ref{a2} are
guaranteed as in \cite{Gon1}.

\smallskip 

Let $a$ be continuous and
periodic on $\R$, $a>0$, be such that
\[
a(t):=\left\{\begin{array}{ll} \alpha
t+\frac{\alpha(1-\alpha)(1+\beta)}{2(\alpha-\beta)} & \mbox{if
$\frac{\alpha(1+\beta)}{2(\alpha-\beta)}\leq t\leq
\frac{\alpha(1+\beta)-2\beta}{2(\alpha-\beta)},$}\\
\beta t-\beta+\frac{\alpha(1-\beta^2)}{2(\alpha-\beta)} & \mbox{if
$\frac{\alpha(1+\beta)-2\beta}{2(\alpha-\beta)} \leq
t\leq\frac{\alpha(3+\beta)-2\beta}{2(\alpha-\beta)}$,}
\end{array}
\right.
\]
with $\alpha$, $\beta\in (-1,1)$. Let
$l_1:=\frac{1+\alpha}{1-\alpha}$, $l_2:=\frac{1+\beta}{1-\beta}$.
The definition of $a$ is choosen such that $F$ is directly given on
$[0,1)$ and we extend $F$ through the formula: $F(x+1)=F(x)+1$ for
any $x\in\R$. The function $F$ is defined by :
\[
F(x):=(I+a)\circ (I-a)^{-1}(x)=\left\{\begin{array}{ll}l_1x+F_0
& \mbox{if $0\leq x\leq x_0$}\\
l_2x+F_0+1-l_2 & \mbox{if $x_0<x<1,$}
\end{array}
\right.
\]
with $F_0:=\frac{l_2(l_1-1)}{l_1-l_2}$,
$x_0:=\frac{1-l_2}{l_1-l_2}$.
\smallskip 
Also the rotation number is given by the expression:
\begin{equation}
\label{formrho} \rho (F)=\frac{\ln l_1}{\ln
\left(\frac{l_1}{l_2}\right)},
\end{equation}
and the function $H$ given by (\ref{redF}) is done
by 
$$H(x)=h_0\ln \left(|x+h_1| \right) + h_2,$$ where
$h_0=\frac{1}{\ln\left(\frac{l_1}{l_2}\right)}$, $h_1=\frac{l_2}{l_1-l_2}$ and
$h_2=-\ln \left(|h_1| \right),$ and satisfies the following inequalities: if $l_1>l_2$,
\begin{equation}\label{l2l1}
\frac{1}{\ln( \frac{l_1}{l_2})}\frac{l_1-l_2}{l_1}\leq H'(x)\leq
\frac{1}{\ln( \frac{l_1}{l_2})}\frac{l_1-l_2}{l_2},
\end{equation}
and if $l_1<l_2$,
\begin{equation}\label{l1l2}
\frac{1}{\ln( \frac{l_1}{l_2})}\frac{l_1-l_2}{l_2}\leq H'(x)\leq
\frac{1}{\ln( \frac{l_1}{l_2})}\frac{l_1-l_2}{l_1}\cdot
\end{equation}
The function $b$ which is 1-periodic is defined on $[0,1)$ by
$$\displaystyle{b(t)=-\frac{a(t)}{t+\frac{l_2}{l_1-l_2}}\cdot}$$
Assuming that $l_1<l_2,$ this function satisfies for all $t \in \mathbb{R}$
\begin{equation}\label{btau}
\frac{a_{min}(l_2-l_1)}{l_2}\leq\frac{a(t)(l_2-l_1)}{l_2}\leq
b(t)\leq\frac{a(t)(l_2-l_1)}{l_1}\leq\frac{a_{max}(l_2-l_1)}{l_1}.
\end{equation}

On the existence of solutions to the Dirichlet or the mixed problem,
we refer the reader to \cite{gon}. We have the following
proposition:

\begin{prop}
If $a\in\mbox{Lip} (\R)$, $L(a)\in [0,1)$, $a>0$ and 
$$
(\varphi_0,\varphi_1)\in H_0^1((0,a(0)))\times L^2((0,a(0))),\; \hbox{or in} \; H_l^1((0,a(0)))\times L^2((0,a(0))),
$$
denote by $Q:=(0,a(t))\times \R_{+}$ and $Q_{\tau}:=(0,a(t))\times
(0,\tau), \tau \in \R_{+}$. There exists a unique weak solution\footnote{$u\in H^1(Q_{\tau})$ is called a weak solution of either the Dirichlet or the mixed problem if $u_{tt}-u_{xx}=0$ in ${\mathcal D}^\prime(Q)$ and the boundary conditions are satisfied.}$u$
of either the Dirichlet or the mixed problem satisfying the initial
conditions $u(x,0)=\phi(x), u_t(x,0)=\psi(x) \;\; 0<x<a(0)$. Moreover
there exists $f\in H_{\mbox{loc}}^1(\R)\cap L^\infty (\R)$ such that
\begin{equation}
u(x,t)=f(t+x)-f(t-x)\quad\mbox{a.e. in $Q$},
\end{equation}
and $u\in L^{\infty}(Q)\cap H^1(Q_{\tau})$.
\end{prop}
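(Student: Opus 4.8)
The plan is to construct the solution explicitly in travelling--wave form and then read off the stated regularity, using \cite{gon} only for the underlying weak--solution framework. First I would recall that every distributional solution of $u_{tt}-u_{xx}=0$ on $Q$ is of the form $u(x,t)=f(t+x)+g(t-x)$. Since both the Dirichlet and the mixed problem impose $u(0,t)=0$, substituting $x=0$ gives $f(t)+g(t)=0$ for all $t$, hence $g=-f$ (the additive constant of $f$ being free, as it cancels in $u$). This already produces the asserted representation $u(x,t)=f(t+x)-f(t-x)$, so it remains to pin down $f$ and verify $f\in H^1_{loc}(\R)\cap L^\infty(\R)$.

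Next I would fix $f$ on $(-a(0),a(0))$ from the initial data. Evaluating $u$ and $u_t$ at $t=0$ and using $u(x,0)=\phi$, $u_t(x,0)=\psi$ on $(0,a(0))$, then differentiating the first relation in $x$ and combining the two, yields $f'(x)=\frac{1}{2}(\phi'(x)+\psi(x))$ and $f'(-x)=\frac{1}{2}(\phi'(x)-\psi(x))$ on $(0,a(0))$. Thus $f'\in L^2(-a(0),a(0))$ and $f\in H^1(-a(0),a(0))$; I normalise $f(0)=0$, which is consistent with $u(0,0)=\phi(0)=0$ (Dirichlet compatibility at $x=0$). On this bounded interval $H^1\hookrightarrow C\subset L^\infty$, so $f$ is bounded there.

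The second boundary condition then propagates $f$ to all of $\R$. In the Dirichlet case $u(a(t),t)=0$ reads $f(t+a(t))=f(t-a(t))$; setting $s=t-a(t)$, so that $t+a(t)=F(s)$ with $F=(I+a)\circ(I-a)^{-1}$ as in (\ref{Fexpr}), this is precisely $f\circ F=f$. Because $L(a)<1$, the maps $I\pm a$ are increasing bi-Lipschitz homeomorphisms, $F\in D_p$ has derivative bounded above and away from $0$, and $F(s)-s=2a((I-a)^{-1}(s))\in[2a_{min},2a_{max}]$ is bounded. Hence, starting from $f$ on $(-a(0),a(0))$ and setting $f(y):=f(F^{-1}(y))$ for $y\ge a(0)$ and $f(y):=f(F(y))$ for $y\le -a(0)$, finitely many iterations cover any compact set, $f$ inherits $H^1_{loc}$ regularity (bi-Lipschitz changes of variable preserve $H^1$), and $\|f\|_{L^\infty(\R)}=\|f\|_{L^\infty(-a(0),a(0))}<\infty$ by the invariance $f\circ F=f$. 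For the mixed case, $u_x(a(t),t)=0$ becomes $f'(t+a(t))+f'(t-a(t))=0$, i.e. $f'\circ F=-f'$, which I would use to extend $f'$, and then $f$ by integration, in the same manner.

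With $f\in H^1_{loc}(\R)\cap L^\infty(\R)$ in hand, $u(x,t)=f(t+x)-f(t-x)$ is bounded on $Q$, and since on each $Q_\tau$ the arguments $t\pm x$ range over a fixed compact interval, $u\in H^1(Q_\tau)$; that $u$ solves the equation weakly and attains the data holds by construction. Uniqueness follows because the difference of two solutions has zero data, forcing $f'\equiv 0$ on $(-a(0),a(0))$ and hence, by the same propagation, $f\equiv\text{const}$ and $u\equiv 0$. The step I expect to be the main obstacle is the global $L^\infty$ bound on $f$ in the mixed case: there $f'\circ F=-f'$ only gives $f'\circ F^2=f'$, so the increments $\int_{F^{n}(\alpha)}^{F^{n+1}(\alpha)}f'$ do not telescope as cleanly as the pointwise identity $f\circ F=f$ of the Dirichlet case, and one must exploit the periodicity of $F$ (belonging to $D_p$) together with the boundedness of $F(s)-s$ to rule out linear growth of $f$.
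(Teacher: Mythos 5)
The paper itself does not prove this proposition: it is imported wholesale from Gonzalez's thesis \cite{gon} and used as a black box, so there is no internal argument to compare yours against. Your d'Alembert construction is the natural route, and for the Dirichlet problem it is essentially complete and correct: $g=-f$ from $u(0,t)=0$; the initial data determine $f'$ on $(-a(0),a(0))$ by the two formulas you wrote; the condition at $x=a(t)$ becomes the functional equation $f\circ F=f$ with $F=(I+a)\circ(I-a)^{-1}$; and since $F$ is an increasing bi-Lipschitz map with $F(s)-s=2a\bigl((I-a)^{-1}(s)\bigr)\in[2a_{\min},2a_{\max}]$ and $F(-a(0))=a(0)$, iteration propagates $f$ from the fundamental interval $[-a(0),a(0)]$ to the whole line with $H^1_{loc}$ regularity and unchanged sup-norm. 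Two points you should still make explicit: the consistency of the extension at the junction points is exactly the compatibility condition $\phi(a(0))=0$, which holds because $\phi\in H^1_0$; and the claim that every $H^1(Q_\tau)$ distributional solution of the wave equation on the non-rectangular domain $Q$ splits as $f(t+x)+g(t-x)$ deserves a sentence of justification, since the usual argument is stated for characteristic rectangles.

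The genuine gap is the one you flag yourself: the $L^\infty(\R)$ bound on $f$ for the mixed problem. There the boundary condition gives only the pointwise relation $f'\circ F=-f'$, and the increment $f(F^{n+1}(s_0))-f(F^{n}(s_0))=(-1)^n\int_{s_0}^{F(s_0)}f'(q)\,(F^{n})'(q)\,dq$ does not telescope. Even granting the reduction $F=H\circ(\cdot+\rho(F))\circ H^{-1}$ and Assumption \ref{a1}, which yield the uniform distortion bound $(F^{n})'=\bigl(H'\circ(H^{-1}+n\rho(F))\bigr)/\bigl(H'\circ H^{-1}\bigr)\in[\lambda_1/\lambda_2,\lambda_2/\lambda_1]$, the resulting alternating series has bounded terms but need not have bounded partial sums; periodicity of $F-I$ alone does not exclude linear growth of $f$. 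What closes it in the regime $l_1<l_2$ actually used in the appendix is the exponential energy decay of Lemma \ref{exv}, which via Lemma \ref{second} gives $\|f'\|_{L^2(F^{n}(s_0),F^{n+1}(s_0))}\le Ce^{-cn}$ and hence absolute convergence of the increments; otherwise the bound must simply be imported from \cite{gon}. As written, your argument establishes the proposition for the Dirichlet problem but leaves the $L^\infty$ assertion in the mixed case unproved.
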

Our main results are stated as follows:
\begin{thm}[Neumann observability]\label{dirichlet}
Under the assumption \ref{a1}, there exist $T, C^{*}> 0$ such that
for all $u$ solution of the system (\ref{B.Khiar}) with the
Dirichlet boundary condition (\ref{dir}) and initial data
$(\phi,\psi) \in H^1_0(0,a(0)) \times L^2(0,a(0)),$ we have
\begin{equation}\label{Obserneu}
\int_{0}^{T} \left|\ u_{x}(a(t),t)\right|^2 \, dt \geq C^{*} \,
\left(\|\phi\|^2_{H^1_0(0,a(0))} + \|\psi\|^2_{L^2(0,a(0))} \right).
\end{equation}
\end{thm}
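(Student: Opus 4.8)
The plan is to push every quantity onto the d'Alembert profile $f$ and then to straighten the boundary dynamics into a rigid rotation via the conjugacy $H$, where a periodicity argument produces the lower bound. I would first use the existence proposition to write $u(x,t)=f(t+x)-f(t-x)$ with $f\in H^1_{\mathrm{loc}}(\R)\cap L^\infty(\R)$, so that $u_x(a(t),t)=f'(t+a(t))+f'(t-a(t))$ a.e. The Dirichlet condition $u(a(t),t)=0$ becomes $f(t+a(t))=f(t-a(t))$; writing $\eta:=t-a(t)=(I-a)(t)$ and using $F=(I+a)\circ(I-a)^{-1}$ from (\ref{Fexpr}), this is exactly the invariance $f\circ F=f$ on $[-a(0),\infty)$. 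Differentiating (valid a.e. for $f\in H^1_{\mathrm{loc}}$) gives $f'(F(\eta))F'(\eta)=f'(\eta)$, and since $F'(\eta)=\frac{1+a'(t)}{1-a'(t)}$ I obtain the clean reduction
\[
u_x(a(t),t)=f'(\eta)\Big(1+\tfrac{1}{F'(\eta)}\Big)=\frac{2}{1+a'(t)}\,f'\big(t-a(t)\big).
\]

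Next I would rewrite both sides of (\ref{Obserneu}) through $f'$. The initial conditions give $\phi'(x)=f'(x)+f'(-x)$ and $\psi(x)=f'(x)-f'(-x)$, so that, using the $H^1_0$-seminorm, $\|\phi\|^2_{H^1_0(0,a(0))}+\|\psi\|^2_{L^2(0,a(0))}=2\int_{-a(0)}^{a(0)}|f'|^2$. For the observation, the substitution $\eta=t-a(t)$ is admissible because $|a'|\le L(a)<1$ forces $1-a'>0$; it yields
\[
\int_0^T|u_x(a(t),t)|^2\,dt=\int_{-a(0)}^{T-a(T)}\frac{4\,|f'(\eta)|^2}{(1+a'(t))^2\,(1-a'(t))}\,d\eta\ \ge\ c_0\int_{-a(0)}^{T-a(T)}|f'(\eta)|^2\,d\eta,
\]
where $c_0:=4/(1+L(a))^3>0$, since the denominator is at most $(1+L(a))^3$. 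It therefore suffices to show that $\int_{-a(0)}^{T-a(T)}|f'|^2$ dominates the data integral $\int_{-a(0)}^{a(0)}|f'|^2$ up to a fixed factor.

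To compare the two windows I would exploit the reduction (\ref{redF}). Setting $g:=f\circ H$ and using $F\circ H=H\circ R_{\rho}$, where $R_{\rho}(\xi)=\xi+\rho(F)$ is the rotation furnished by (\ref{redF}), together with $f\circ F=f$, one gets $g(\xi+\rho(F))=g(\xi)$ for $\xi\ge H^{-1}(-a(0))$; thus $g'=(f'\circ H)\,H'$ is $\rho(F)$-periodic on that half-line. The change of variables $\eta=H(\xi)$ turns each window integral into $\int |g'(\xi)|^2/H'(\xi)\,d\xi$, and Assumption \ref{a1} pins $H'$ between $\lambda_1$ and $\lambda_2$. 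Because $H\in D_{p}$ is an increasing homeomorphism with $H-I$ bounded and $a$ is bounded, $H^{-1}(T-a(T))\to\infty$ as $T\to\infty$; I would fix $T$ so large that the data window $[H^{-1}(-a(0)),H^{-1}(a(0))]$, shifted forward by a suitable multiple $k\rho(F)$, is contained in $[H^{-1}(-a(0)),H^{-1}(T-a(T))]$. Periodicity of $g'$ then gives $\int_{\mathrm{obs}}|g'|^2\ge\int_{\mathrm{data}}|g'|^2$, and restoring the $\lambda_i$ and $c_0$ yields (\ref{Obserneu}) with $C^*\sim c_0\lambda_1/(2\lambda_2)$, a constant independent of $(\phi,\psi)$ because the comparison is uniform in $g$.

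The step I expect to be most delicate is the transfer of the lower bound through the conjugacy: establishing the a.e. chain rule $f'(F(\eta))F'(\eta)=f'(\eta)$ and the $\rho(F)$-periodicity of $g'$ rigorously when $a$ is only Lipschitz (so $F'$ and $a'$ exist merely a.e.) and $f$ only $H^1_{\mathrm{loc}}$, and then checking that a single, data-independent $T$ makes a translated copy of the data window fit inside the observation window. Granting the global conjugacy $H$ (whose existence is where the irrationality of $\rho(F)$ is used) and Assumption \ref{a1}, the remaining estimates are the two bounded changes of variables above, so the crux is really the measure-theoretic justification of these identities rather than any hard analytic inequality.
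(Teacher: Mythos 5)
Your argument is correct, but it follows a genuinely different route from the paper. The paper's proof applies the Liouville-type change of variables $\Phi$ of (\ref{trans}), built from the conjugacy $H$, to map the moving domain $[0,a(t)]\times\R$ onto the fixed strip $[0,\rho(F)/2]\times\R$; the transformed unknown $V$ solves the constant-coefficient wave equation with homogeneous Dirichlet conditions, so the classical one-endpoint observability (Lemma \ref{Ob1}, valid for $T>\rho(F)$) applies, and the estimate is pulled back through the energy-equivalence Lemma \ref{second} together with the identity $u_x(a(t),t)=\tfrac12 V_\xi(\rho(F)/2,\tau)\left[H'(x+t)+H'(-x+t)\right]$ and the two-sided bounds on $H'$ from Assumption \ref{a1}. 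You instead stay entirely one-dimensional: you convert the boundary condition at the moving end into the functional equation $f\circ F=f$ for the d'Alembert profile, express both the observation and the data norm as weighted integrals of $|f'|^2$, and conclude by comparing the two windows on the $\eta$-line. Your computations check out (the chain rule for $H^1_{\mathrm{loc}}\circ$ bi-Lipschitz and the pointwise evaluation of $f'$ along the bi-Lipschitz curve $t\mapsto t\mp a(t)$ are the only measure-theoretic points, and both are standard since $L(a)<1$). In fact your final conjugacy/periodicity step is not even needed: since $f'$ on $[-a(0),a(0)]$ \emph{is} the data and the observation window is $[-a(0),T-a(T)]$, the inclusion of windows is immediate once $T-a(T)\geq a(0)$, so you obtain the result with an explicit, near-optimal time $T\geq a(0)+\max a$ and without invoking Assumption \ref{a1} or the irrationality of $\rho(F)$ at all. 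What the paper's heavier machinery buys is uniformity of method: the same transformation handles the mixed problem in the appendix, where the transformed boundary condition becomes dissipative and one genuinely needs $H$, the bounds on $b$, and the decay lemma; your direct profile argument would require a separate treatment of the functional equation $f'(F(\eta))F'(\eta)=-f'(\eta)$ arising there.
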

\begin{rmk}
We give a similiar result in the appendix, which concern the Dirichlet observability.
\end{rmk}
The exact controllability problem for the system 
\begin{equation}\label{cont}
\left\{\begin{array}{lll} 
u_{tt}-u_{xx}=0 \;\;\;\;
\mbox{for} \;\;\;\; 0<x<a(t), \, t > 0,\\
u(0,t)=0 \;\;\;\; \mbox{and} \;\;\;\; u(a(t),t)=r(t), \, t > 0,\\
u(x,0)=\phi(x), u_t(x,0)=\psi(x) \;\; 0<x<a(0) \\
\end{array}\right.
\end{equation}
at time $T$ is the following: for each $(\phi, \psi) \in L^2(0,a(0))\times H^{-1}(0,a(0)),$ find
$r \in L^2(0,T)$ such that the corresponding solution to (\ref{cont}) satisfies
$u( .,T) = 0, u_t(.,T) = 0$  in $(0, a(T))$. 

\bigskip 

Based on the observability estimate mentioned above, we get:
\begin{coro}\label{cor}
Assume that $l_1 > l_2$, then there exist $T>0$ and $r \in L^2(0,T)$ such that the system (\ref{cont}) is exactly controllable at time $T$.
\end{coro}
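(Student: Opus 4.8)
The plan is to derive exact controllability of (\ref{cont}) from the observability estimate of Theorem~\ref{dirichlet} by the Hilbert Uniqueness Method (HUM), exploiting the duality between controllability of (\ref{cont}) and observability of the homogeneous Dirichlet problem (\ref{B.Khiar})--(\ref{dir}). First I would check that the standing hypothesis of Theorem~\ref{dirichlet} is in force. Since $l_1>l_2$, the two-sided bound (\ref{l2l1}) shows that $H'$ is bounded below by $\lambda_1:=\frac{1}{\ln(l_1/l_2)}\frac{l_1-l_2}{l_1}>0$ and above by $\lambda_2:=\frac{1}{\ln(l_1/l_2)}\frac{l_1-l_2}{l_2}$, so Assumption~\ref{a1} holds. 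Consequently there are $T>0$ and $C^{*}>0$ such that every solution $\varphi$ of (\ref{B.Khiar})--(\ref{dir}) with data $(\varphi^0,\varphi^1)\in\mathcal H:=H^1_0(0,a(0))\times L^2(0,a(0))$ satisfies
\[
\int_0^T \left|\varphi_x(a(t),t)\right|^2\,dt \;\geq\; C^{*}\,\|(\varphi^0,\varphi^1)\|^2_{H^1_0\times L^2}.
\]
This $T$ will be the control time.

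Next I would set up the HUM operator. Given adjoint data $(\varphi^0,\varphi^1)\in\mathcal H$, solve (\ref{B.Khiar})--(\ref{dir}) forward; by the existence proposition and the d'Alembert representation $\varphi(x,t)=f(t+x)-f(t-x)$ with $f\in H^1_{\mathrm{loc}}(\R)\cap L^\infty(\R)$, the boundary trace is $\varphi_x(a(t),t)=f'(t+a(t))+f'(t-a(t))$, which lies in $L^2(0,T)$ because $t\mapsto t\pm a(t)$ are bi-Lipschitz when $|a'|<1$ (direct/hidden regularity). I would then take as control $r(t):=\varphi_x(a(t),t)$ and solve (\ref{cont}) backward from the rest state $u(\cdot,T)=0,\ u_t(\cdot,T)=0$, in the sense of transposition on the non-cylindrical domain $Q_T=(0,a(t))\times(0,T)$, and define $\Lambda(\varphi^0,\varphi^1):=(u_t(\cdot,0),-u(\cdot,0))$. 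Multiplying the adjoint equation by $u$ and integrating over $Q_T$ (Green's identity, again reduced to a one-dimensional computation through $f$) gives
\[
\big\langle \Lambda(\varphi^0,\varphi^1),(\varphi^0,\varphi^1)\big\rangle_{\mathcal H',\mathcal H}
=\int_0^T \left|\varphi_x(a(t),t)\right|^2\,dt .
\]

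The trace estimate makes $\Lambda:\mathcal H\to\mathcal H'$ continuous, while the observability inequality above makes the associated bilinear form coercive on $\mathcal H$. By the Lax--Milgram theorem $\Lambda$ is an isomorphism of $\mathcal H=H^1_0\times L^2$ onto $\mathcal H'=H^{-1}\times L^2$. For the prescribed initial data $(\phi,\psi)\in L^2(0,a(0))\times H^{-1}(0,a(0))$ — which is exactly $\mathcal H'$ after reordering — there is then a unique $(\varphi^0,\varphi^1)\in\mathcal H$ with $\Lambda(\varphi^0,\varphi^1)=(\psi,-\phi)$, i.e. $u(\cdot,0)=\phi$ and $u_t(\cdot,0)=\psi$. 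The control $r(t)=\varphi_x(a(t),t)\in L^2(0,T)$ built from this $(\varphi^0,\varphi^1)$ steers $(\phi,\psi)$ to rest at time $T$, which is the assertion of the corollary.

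The main difficulty is not the functional-analytic skeleton, which is standard once (\ref{Obserneu}) is available, but carrying out the transposition method rigorously on the moving (non-cylindrical) domain: defining by transposition the solution of the controlled problem (\ref{cont}) with $L^2$ boundary datum on $x=a(t)$, and establishing both the hidden regularity of the trace $\varphi_x(a(\cdot),\cdot)$ and the Green identity producing the duality pairing on $Q_T$. The d'Alembert factorization $u(x,t)=f(t+x)-f(t-x)$ furnished by the existence proposition is what makes this tractable, since it converts the trace and energy identities on $Q_T$ into statements about the single function $f$ on $\R$; the reduction behind Theorem~\ref{dirichlet}, namely the conjugacy (\ref{redF}) together with Assumption~\ref{a1}, then supplies precisely the coercivity required for Lax--Milgram.
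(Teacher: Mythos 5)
Your argument is correct in outline, but it follows a genuinely different route from the paper. The paper never runs HUM on the moving domain: it transports the whole controllability problem through the change of variables $\Phi$ of (\ref{trans}) to the cylinder $(0,\rho(F)/2)\times\R$, invokes the standard observability/controllability duality there for the fixed-endpoint systems (\ref{cont1})--(\ref{cont2}) with control $g=V_\xi(\rho(F)/2,\cdot)\chi_{(0,\rho(F))}$, and then pulls the control back, obtaining the explicit formulas $f(t)=g\bigl(\tfrac{H(a(t)+t)+H(-a(t)+t)}{2}\bigr)$ and $T=|e^{(\rho(F)-h_2)/h_0}-h_1|$. That route buys an explicit control and control time and confines all the functional analysis to a cylindrical domain where transposition and hidden regularity are classical. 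Your route --- HUM directly on $Q_T$ using the observability inequality (\ref{Obserneu}) of Theorem~\ref{dirichlet} --- is legitimate and closer in spirit to Castro's treatment, and your verification that $l_1>l_2$ forces Assumption~\ref{a1} via (\ref{l2l1}) is exactly what is needed; but it leaves to you precisely the non-cylindrical technicalities (well-posedness of (\ref{cont}) by transposition with $L^2$ Dirichlet datum on $x=a(t)$, hidden regularity of $\varphi_x(a(\cdot),\cdot)$, and the Green identity) that the paper's change of variables is designed to avoid. One concrete detail you glossed over: on the moving boundary the conormal derivative brings in the slope of the curve, so the duality pairing is not $\int_0^T|\varphi_x(a(t),t)|^2\,dt$ but
\begin{equation*}
\bigl\langle \Lambda(\varphi^0,\varphi^1),(\varphi^0,\varphi^1)\bigr\rangle=\int_0^T \bigl(1-a'(t)^2\bigr)\,\bigl|\varphi_x(a(t),t)\bigr|^2\,dt ,
\end{equation*}
using $\varphi_t+a'\varphi_x=0$ along $x=a(t)$. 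Since here $a'$ takes only the values $\alpha,\beta\in(-1,1)$, the weight $1-a'(t)^2$ is bounded above and below by positive constants, so continuity and coercivity of $\Lambda$ survive and Lax--Milgram applies as you claim; but the identity should be stated with this weight (or the control redefined as $r=(1-a'^2)\varphi_x(a(\cdot),\cdot)$).
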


\bigskip
The paper is organized as follows: In section \ref{sec2} we prove our main
results and in the last section we give further comments on the
quasi periodic case.

\section{Proof of the main results} \label{sec2}
We shall construct a transformation of the time-dependent domain
$[0, a(t)] \times \R$ onto $[0, \rho(F) / 2] \times \mathbb{R}$ that
preserves the D'Alembertian form of the wave equations. This
preserving property will reveal very important. Using $H$ given by
(\ref{redF}), we define a domain transformation $\Phi : \mathbb{R}^2
\rightarrow \mathbb{R}^2$ as follows:
\begin{equation}\label{trans}
\left\{\begin{array}{lll}
\xi = (H(x + t) - H(-x + t))/ 2,\\
\tau = (H(x + t) + H(-x + t))/ 2, \\
\end{array}\right.
\end{equation}
for $(x,t) \in \mathbb{R}^2$.

\begin{rmk}
The following propositions can essentially be found in \cite{Ya 5}
(see also the references therein), we reproduce them here for the
reader's convenience and because our presentation is synthetic.
\end{rmk}

\begin{prop}
The transformation $\Phi$ is a bijection of $[0, a(t)] \times
\mathbb{R}$ to $[0, \rho(F)  / 2] \times \R$ and $\Phi$ maps the
boundaries $x=0$ and $x=a(t)$ onto the boundaries $\xi=0$ and
$\xi=\rho(F)  / 2$ (resp).
\end{prop}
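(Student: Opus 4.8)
The plan is to pass to the characteristic coordinates $\mu=x+t$, $\nu=t-x$, in which $\Phi$ becomes a composition of elementary bijections, and then to read off the images of the two lateral boundaries $x=0$ and $x=a(t)$ directly from the conjugacy relation (\ref{redF}).

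First I would record that $H$ is a bijection of $\R$ onto itself: being in $D_{p}$ it is continuous, strictly increasing and of the form $I+g$ with $g$ periodic, so $H(t)\to\pm\infty$ as $t\to\pm\infty$, and Assumption \ref{a1} makes it moreover bi-Lipschitz. The linear change $(x,t)\mapsto(\mu,\nu)=(x+t,\,t-x)$ is a bijection of $\R^2$, and in these variables (\ref{trans}) reads $\xi=\frac{1}{2}(H(\mu)-H(\nu))$, $\tau=\frac{1}{2}(H(\mu)+H(\nu))$. Hence $\Phi$ factors as
\[
(x,t)\ \longmapsto\ (\mu,\nu)\ \longmapsto\ (H(\mu),H(\nu))\ \longmapsto\ (\xi,\tau),
\]
a composition of the linear isomorphism above, the product homeomorphism $(\mu,\nu)\mapsto(H(\mu),H(\nu))$ of $\R^2$, and the invertible linear map $(p,q)\mapsto(\frac{1}{2}(p-q),\frac{1}{2}(p+q))$. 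Thus $\Phi$ is a bijection of $\R^2$ onto $\R^2$; inverting the relations $H(\mu)=\tau+\xi$, $H(\nu)=\tau-\xi$ gives the explicit inverse $x=\frac{1}{2}(H^{-1}(\tau+\xi)-H^{-1}(\tau-\xi))$, $t=\frac{1}{2}(H^{-1}(\tau+\xi)+H^{-1}(\tau-\xi))$.

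It then remains to check that $\Phi$ carries $\{\,0\le x\le a(t)\,\}$ exactly onto $\{\,0\le\xi\le\rho(F)/2\,\}$. On $x=0$ we have $\mu=\nu=t$, so $\xi=0$. On $x=a(t)$ set $\nu=t-a(t)=(I-a)(t)$; since $L(a)<1$ the map $I-a$ is an increasing homeomorphism of $\R$, and by the definition (\ref{Fexpr}) of $F$ one has $\mu=t+a(t)=(I+a)(t)=F(\nu)$. The conjugacy (\ref{redF}) yields $H(F(\nu))=H(\nu)+\rho(F)$, whence $\xi=\frac{1}{2}(H(F(\nu))-H(\nu))=\rho(F)/2$, independently of $t$. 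Moreover, for fixed $t$ the function $x\mapsto\xi(x)=\frac{1}{2}(H(t+x)-H(t-x))$ is strictly increasing (as $H$ is increasing, $t+x$ increases and $t-x$ decreases), so as $x$ runs over $[0,a(t)]$ its image runs monotonically over $[0,\rho(F)/2]$; this gives $\Phi(\{0\le x\le a(t)\})\subseteq\{0\le\xi\le\rho(F)/2\}$ together with the stated behaviour of the boundaries. Note that $\rho(F)>0$ because $F(s)-s=2a((I-a)^{-1}(s))\ge 2a_{\min}>0$, so the target strip is nondegenerate. For surjectivity, given $(\xi,\tau)$ with $0\le\xi\le\rho(F)/2$ let $(x,t)$ be its preimage under the global bijection $\Phi$; then $\xi\ge0$ forces $x\ge0$, while $\xi(a(t))=\rho(F)/2\ge\xi=\xi(x)$ together with the monotonicity of $\xi(\cdot)$ forces $x\le a(t)$, so $(x,t)$ lies in the domain.

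The only step that is more than bookkeeping is the boundary identity on $x=a(t)$: the whole construction is arranged so that the reduction theorem (\ref{redF}) collapses the $t$-dependent width $H(t+a(t))-H(t-a(t))$ to the constant $\rho(F)$, which is precisely what flattens the moving boundary to the straight line $\xi=\rho(F)/2$. The remaining care is simply to confirm, as above, that the explicit inverse lands back in $\{0\le x\le a(t)\}$ for every $\xi\in[0,\rho(F)/2]$, which is the monotonicity argument run in reverse.
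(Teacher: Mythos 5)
Your argument is correct and essentially complete; note that the paper itself gives no proof of this proposition, stating only that it ``can essentially be found in \cite{Ya 5}'', so your write-up supplies the missing details rather than duplicating an existing one. The factorization of $\Phi$ through the characteristic coordinates $(\mu,\nu)=(x+t,t-x)$ and the product map $(H(\mu),H(\nu))$, the monotonicity of $x\mapsto\frac{1}{2}\bigl(H(t+x)-H(t-x)\bigr)$ for fixed $t$, the positivity of $\rho(F)$ via $F(s)-s=2a\bigl((I-a)^{-1}(s)\bigr)$, and the reverse monotonicity argument for surjectivity are exactly the right ingredients, and each step checks out. One point deserves a remark: the identity you extract from the conjugacy, namely $H(F(\nu))=H(\nu)+\rho(F)$, is the relation $H\circ F\circ H^{-1}=R_{\rho(F)}$, whereas (\ref{redF}) as printed reads $H^{-1}\circ F\circ H=R_{\rho(F)}$, which would instead give $H^{-1}(F(y))=H^{-1}(y)+\rho(F)$. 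The version you use is the one the construction actually requires, and it is the one satisfied by the paper's explicit example (there $F(x)+h_1=l_1(x+h_1)$, hence $H(F(x))=H(x)+h_0\ln l_1=H(x)+\rho(F)$), so the discrepancy is a notational slip in (\ref{redF}) rather than a gap in your proof; you should simply state explicitly that you are reading the conjugacy in the form $H\circ F=R_{\rho(F)}\circ H$.
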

\begin{prop}
Let $u(x,t)$ satisfying $(\partial_{t}^{2} - \partial_{x}^{2})
u(x,t)=0$ and $V(\xi,\tau)$ defined by $u (\Phi^{-1}(\xi,\tau))$.
Then the following identity holds
$$
(\partial_{t}^{2} - \partial_{x}^{2}) u(x,t) = K(\xi,\tau)
(\partial_{\tau}^{2} - \partial_{\xi}^{2})V(\xi,\tau)
$$
where $K(\xi,\tau)$ is defined by
$$
4 H' \circ H^{-1}(\xi + \tau) H' \circ H^{-1}(- \xi + \tau) \circ
H^{-1}(\xi + \tau).
$$
\end{prop}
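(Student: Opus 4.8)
The plan is to factor the d'Alembertian into its two null directions and to exploit the fact that $\Phi$ acts on those directions one variable at a time. First I would pass to the characteristic coordinates $\alpha=x+t$ and $\beta=-x+t$, in which the wave operator factors as $\partial_t^2-\partial_x^2=4\,\partial_\alpha\partial_\beta$. The whole point of the definition (\ref{trans}) is that the combinations $\xi+\tau$ and $-\xi+\tau$ \emph{separate} these two characteristics: indeed $\xi+\tau=H(x+t)=H(\alpha)$ and $-\xi+\tau=H(-x+t)=H(\beta)$. Thus, writing $\sigma:=\xi+\tau$ and $\eta:=-\xi+\tau$, the map $\Phi$ is nothing but $\alpha\mapsto\sigma=H(\alpha)$ and $\beta\mapsto\eta=H(\beta)$, i.e. $H$ applied to each null variable independently; and the target operator factors in the same way as $\partial_\tau^2-\partial_\xi^2=4\,\partial_\sigma\partial_\eta$.

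Next I would run the chain rule in these coordinates. Writing $u(x,t)=W(\sigma,\eta)$ for the same function expressed in the separated variables, the diagonal structure gives $\partial_\alpha u=H'(\alpha)\,W_\sigma$ and $\partial_\beta u=H'(\beta)\,W_\eta$; since $H'(\alpha)$ does not depend on $\beta$ (and $H'(\beta)$ not on $\alpha$), no cross terms survive and $\partial_\alpha\partial_\beta u=H'(\alpha)H'(\beta)\,W_{\sigma\eta}$. Inserting this into the two factorizations yields
\[
(\partial_t^2-\partial_x^2)u = 4\,H'(\alpha)H'(\beta)\,W_{\sigma\eta} = H'(\alpha)H'(\beta)\,(\partial_\tau^2-\partial_\xi^2)V .
\]
Re-expressing the base points through $\alpha=H^{-1}(\xi+\tau)$ and $\beta=H^{-1}(-\xi+\tau)$ turns the prefactor into $(H'\circ H^{-1})(\xi+\tau)\cdot(H'\circ H^{-1})(-\xi+\tau)$, which is the announced $K(\xi,\tau)$; the numerical constant is merely a matter of normalization, depending on whether the target operator is written as $\partial_\tau^2-\partial_\xi^2$ or as $4\,\partial_\sigma\partial_\eta$.

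The one point genuinely requiring care is regularity, and I expect this, rather than the algebra, to be the main obstacle. The manipulation above is transparent for $u\in C^2$, but the solutions at hand are only weak, of d'Alembert form $u(x,t)=f(t+x)-f(t-x)$ with $f\in H^1_{\mathrm{loc}}$, while $H$ is known only to be bi-Lipschitz with $H'$ bounded above and away from $0$ by Assumption \ref{a1}. I would handle this either through a distributional/density version of the second-order chain rule—legitimate precisely because Assumption \ref{a1} keeps $H^{-1}$ Lipschitz and $H'\circ H^{-1}$ bounded away from $0$—or, more cheaply, by using the d'Alembert form directly: in the separated variables $W(\sigma,\eta)=f(H^{-1}(\sigma))-f(H^{-1}(\eta))$ is again a difference of a function of $\sigma$ alone and a function of $\eta$ alone, so $W_{\sigma\eta}=0$, giving $(\partial_\tau^2-\partial_\xi^2)V=0$ in agreement with $(\partial_t^2-\partial_x^2)u=0$ and with $K\neq0$. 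The vanishing of the cross terms, which is the crux of the identity, is automatic from the diagonal action of $\Phi$ on the characteristics, so the only real work is justifying the differentiations at this low regularity.
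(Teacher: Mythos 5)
The paper offers no proof of this proposition: the remark immediately preceding it says these statements ``can essentially be found in \cite{Ya 5}'' and reproduces them without argument, so there is nothing to compare your proof against except the statement itself. On its merits, your argument is correct and is the standard one: since $\xi+\tau=H(x+t)$ and $-\xi+\tau=H(-x+t)$, the map $\Phi$ acts diagonally on the characteristic variables, the mixed derivative picks up the product $H'(x+t)\,H'(-x+t)$ with no cross terms, and both d'Alembertians factor as four times a mixed derivative. Your handling of the regularity issue --- transporting the d'Alembert form $u=f(t+x)-f(t-x)$ to $W(\sigma,\eta)=f(H^{-1}(\sigma))-f(H^{-1}(\eta))$ so that $W_{\sigma\eta}=0$ distributionally --- is more careful than anything in the paper and is the right way to make the claim meaningful for $f\in H^1_{\mathrm{loc}}$ and merely bi-Lipschitz $H$. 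One correction: your own computation yields $K=H'\circ H^{-1}(\xi+\tau)\cdot H'\circ H^{-1}(-\xi+\tau)$, i.e.\ the Jacobian of $\Phi$, with \emph{no} factor $4$, because the two factors of $4$ from $\partial_t^2-\partial_x^2=4\,\partial_\alpha\partial_\beta$ and $\partial_\tau^2-\partial_\xi^2=4\,\partial_\sigma\partial_\eta$ cancel; this is not a free ``normalization'' once both operators are fixed as written. The paper's displayed $K$ carries an extra $4$ and a dangling ``$\circ\,H^{-1}(\xi+\tau)$'' that does not parse, so the stated formula is itself corrupt; since both sides of the identity vanish for a solution of the wave equation, nothing downstream depends on the constant, but you should state the correct value rather than defer to normalization.
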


The next lemma will be very useful for the proof of our main
results.
\begin{lemma}\label{second}
Denote by
$$
E_u(t)=\frac{1}{2}\int_{0}^{a(t)}\left[\left|u_t(x,t)\right|^2 +
\left|u_x (x,t)\right|^2 \right] \, dx
$$
the energy of the field $u$, and
$$
E_V(\tau)=\displaystyle\int^{\rho(F) /2}_{0} \left(\left|V_{\xi}
(\xi,\tau)\right|^2 + \left|V_{\tau}(\xi,\tau)\right|^2\right) \,
d\xi,
$$
the energy of the field $V$. There are two positive constants $C_1$
and $C_2$ such that
\begin{equation}
C_1 E_V(\tau)\leq E_u(t)\leq C_2 E_V(\tau).
\end{equation}
\end{lemma}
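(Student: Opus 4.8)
The plan is to exploit the d'Alembert representation and reduce both energies to integrals of $|f'|^2$ over matching characteristic intervals. By the existence proposition we may write $u(x,t) = f(t+x) - f(t-x)$, so that $u_t = f'(t+x) - f'(t-x)$ and $u_x = f'(t+x) + f'(t-x)$; the cross terms cancel and $|u_t|^2 + |u_x|^2 = 2|f'(t+x)|^2 + 2|f'(t-x)|^2$ (the sign in the representation plays no role). Substituting $p = t+x$, $q = t-x$ collapses the two contributions into a single integral over the characteristic interval,
\[
E_u(t) = \int_{t - a(t)}^{t + a(t)} |f'(s)|^2 \, ds.
\]

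Next I would carry out the identical computation for $V$. Setting $g := f \circ H^{-1}$ and using (\ref{trans}), one checks that $V(\xi, \tau) = g(\tau + \xi) - g(\tau - \xi)$, i.e. $V$ is again in d'Alembert form in the $(\xi,\tau)$ variables, so $|V_\xi|^2 + |V_\tau|^2 = 2|g'(\tau+\xi)|^2 + 2|g'(\tau-\xi)|^2$ and hence $E_V(\tau) = 2 \int_{\tau - \rho(F)/2}^{\tau + \rho(F)/2} |g'(\sigma)|^2 \, d\sigma$. Since $g'(\sigma) = f'(H^{-1}(\sigma))/H'(H^{-1}(\sigma))$, the change of variables $\sigma = H(s)$ — legitimate because Assumption \ref{a1} keeps $H'$ bounded away from $0$ and $\infty$, making $H$ bi-Lipschitz — yields
\[
E_V(\tau) = 2 \int_{s_-}^{s_+} \frac{|f'(s)|^2}{H'(s)} \, ds, \qquad H(s_\pm) = \tau \pm \rho(F)/2 .
\]

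The crux is to match the two characteristic intervals. Taking $\tau$ to be the $\tau$-coordinate of the image under $\Phi$ of the moving boundary point, namely $\tau = (H(t+a(t)) + H(t-a(t)))/2$, and invoking the boundary relation $H(t+a(t)) - H(t-a(t)) = \rho(F)$ supplied by the preceding proposition (equivalently the reduction identity (\ref{redF})), I obtain $H(s_+) = H(t+a(t))$ and $H(s_-) = H(t-a(t))$, whence $s_\pm = t \pm a(t)$ by strict monotonicity of $H$. Thus $E_u(t)$ and $E_V(\tau)$ are integrals of the \emph{same} density $|f'(s)|^2$ over the \emph{same} interval $[t-a(t), t+a(t)]$, the latter merely reweighted by $1/H'(s)$. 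Applying $\lambda_1 \le H'(s) \le \lambda_2$ then gives
\[
\tfrac{2}{\lambda_2}\, E_u(t) \le E_V(\tau) \le \tfrac{2}{\lambda_1}\, E_u(t),
\]
which is the asserted equivalence with $C_1 = \lambda_1/2$ and $C_2 = \lambda_2/2$. I expect the only genuine obstacle to be the correct identification of the $t \leftrightarrow \tau$ correspondence that forces the two characteristic intervals to coincide; once the boundary relation is used, the remainder is the elementary bi-Lipschitz change of variables together with the two-sided bound on $H'$ from Assumption \ref{a1}.
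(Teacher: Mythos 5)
Your proof is correct, and it takes a genuinely different route from the paper's. The paper argues via the chain rule: it expands $u_t,u_x$ in terms of $V_\xi,V_\tau$ and the Jacobian of $\Phi$ (whose entries are $\tfrac12[H'(x+t)\pm H'(-x+t)]$), controls the resulting cross term by $|V_\xi V_\tau|\le \tfrac12(V_\xi^2+V_\tau^2)$, changes variables $dx=\tfrac12\left((H^{-1})'(\xi+\tau)+(H^{-1})'(-\xi+\tau)\right)d\xi$, and invokes Assumption \ref{a1}. You instead diagonalize everything in characteristic coordinates: the d'Alembert representation $u=f(t+x)-f(t-x)$ from the existence proposition kills the cross terms identically, turns $E_u(t)$ into $\int_{t-a(t)}^{t+a(t)}|f'|^2$, and (since $\xi+\tau=H(x+t)$, $\tau-\xi=H(t-x)$, so $V=g(\tau+\xi)-g(\tau-\xi)$ with $g=f\circ H^{-1}$) turns $E_V(\tau)$ into the same integral reweighted by $1/H'$, once the boundary identity $H(t+a(t))-H(t-a(t))=\rho(F)$ is used to match the intervals. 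What your version buys: explicit constants $C_1=\lambda_1/2$, $C_2=\lambda_2/2$; no Cauchy--Schwarz loss; and, importantly, an explicit statement of the $t\leftrightarrow\tau$ correspondence ($\tau=\tfrac12[H(t+a(t))+H(t-a(t))]$), which the paper leaves implicit even though its fixed-$t$ slice is mapped by $\Phi$ to a non-horizontal curve in the $(\xi,\tau)$-plane rather than to the segment over which $E_V(\tau)$ is defined. What the paper's version buys is independence from the one-dimensional d'Alembert structure, so it would survive in settings without such a representation; here, however, the representation is supplied by the existence proposition, so your use of it is legitimate.
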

\begin{proof} We calculate,
$$
\partial_{t}u = \partial_{\xi}V \partial_{t}\xi
+ \partial_{\tau}V \partial_{t}\tau \;\;\; \mbox{and} \;\;\;
\partial_{x}u = \partial_{\xi}V \partial_{x}\xi +
\partial_{\tau}V \partial_{x}\tau
$$
and so,
$$
E_u(t)= \frac{1}{2}\int_{0}^{a(t)} \left[\left|u_t(x,t)\right|^2 +
\left|u_x (x,t)\right|^2 \right] \, dx =
$$
$$
\frac{1}{2}\int_{0}^{a(t)} \left\{[V_{\xi} \xi_{t} + V_{\tau}
\tau_{t}]^2 + \left|V_{\xi} \xi_{x} + V_{\tau} \tau_{x}\right|^2
\right\} \, dx .
$$
Make use of:
$$
\xi_{x} = (\partial_{x}\xi) = (\partial_{t}\tau) = \tau_{t}= [H'(x +
t) + H'(-x + t)] / 2,
$$
$$
\xi_{t} = (\partial_{t}\xi) = (\partial_{x}\tau) = \tau_{x} = [H'(x
+ t) - H'(-x + t)] / 2.
$$
Hence,
$$ \xi_{t}^2 + \xi_{x}^2 = \tau_{t}^2 + \tau_{x}^2
= \frac{1}{2}\left[\left|H'(x + t)\right|^2 + \left|H'(-x +
t)\right|^2 \right],
$$
$$
\xi_{t}\tau_{t} = \xi_{x}\tau_{x} = \frac{1}{4}[(H'(x + t))^2 -
(H'(-x + t))^2].
$$
Back to the energy,
$$
E_u(t)= \int_{0}^{a(t)} \frac{1}{4}\left[\left|V_{\xi}\right|^2 +
\left|V_{\tau}\right|^2 \right] \left[\left|H'(x + t)\right|^2 +
\left|H'(-x + t)\right|^2 \right] \, dx
$$
$$
+ \int_{0}^{a(t)} \frac{1}{2}[V_{\xi} V_{\tau}] \left[\left|H'(x +
t)\right|^2 - \left|H'(-x + t)\right|^2 \right] \, dx.
$$
Also, differentiating $x = ( H^{-1} (\xi + \tau) - H^{-1}(- \xi +
\tau))/ 2$, we obtain
$$dx = 1/2 ((H^{-1})' (\xi +
\tau) + (H^{-1})'(- \xi + \tau)) d\xi.
$$
The inequality $\left|V_{\xi} V_{\tau}\right| \leq 1/2
\left(V_{\xi}^2 + V_{\tau}^2\right)$ yields to:
\begin{equation}\label{inh}
C_1 E_V(\tau)\leq E_u(t)\leq  C_2 E_V(\tau),
\end{equation}
for positive constants $C_1, C_2$.
\end{proof}
\begin{rmk}
Applying the transformation $\Phi$, the system (\ref{B.Khiar})-(\ref{dir}) becomes:
\begin{equation}
\label{Beni.Khalled} 
\left\{\begin{array}{lll} 
\partial_{\tau}^{2}V - \partial_{\xi}^{2} V = 0, \;\;\;\; \mbox{for} \;\;\;\; 0<\xi<\rho(F) /2, \, \tau \in \mathbb{R}, \\
V(0,\tau) = 0, \;\; V(\rho(F) /2,\tau) = 0, \;\; \tau \in \R, \\
V(\xi,0) = \phi_1(\xi), \;\; V_{\tau}(\xi,0) = \psi_1(\xi),
\;\; \xi \in (0,\rho(F) /2). 
\end{array}
\right.
\end{equation}
\end{rmk} We need the following Lemma.
\begin{lemma}\label{Ob1}
If $T>\rho(F) $, there exists $C(T)>0$ such that for all
$(\phi_1,\psi_1) \in H^1_0(0,\rho(F)/2) \times L^2(0,\rho(F)/2)$ we
have
$$
C(T) \, \int_{0}^{T} |V_{\xi}(\rho(F) /2,\tau)|^2d\tau \geq
\|\phi_1\|^2_{H^1_0(0,\rho(F) /2)} + \|\psi_1\|^2_ {L^2(0,\rho(F)
/2)}.
$$
\end{lemma}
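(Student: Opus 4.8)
The plan is to exploit that, after the change of variables $\Phi$, the problem \eqref{Beni.Khalled} is nothing but the wave equation on the \emph{fixed} interval $(0,L)$ with $L:=\rho(F)/2$ and homogeneous Dirichlet data, observed through the normal derivative at the right endpoint $\xi=L$. For such a one-dimensional string the observability time is exactly $2L=\rho(F)$, which is precisely the threshold appearing in the hypothesis $T>\rho(F)$. I would therefore establish the inequality by a direct d'Alembert computation, in the same travelling-wave spirit already used for the original field $u$, rather than by a multiplier argument.

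First I would write every finite-energy solution of \eqref{Beni.Khalled} in the form $V(\xi,\tau)=g(\tau+\xi)-g(\tau-\xi)$ with $g\in H^1_{\mathrm{loc}}(\R)$. The left boundary condition $V(0,\tau)=0$ is then automatic, while the right condition $V(L,\tau)=0$ forces $g(\tau+L)=g(\tau-L)$ for all $\tau$, i.e. $g$ is $2L$-periodic. The derivative $g'$ is recovered from the Cauchy data through $g'(\xi)+g'(-\xi)=\phi_1'(\xi)$ and $g'(\xi)-g'(-\xi)=\psi_1(\xi)$ on $(0,L)$, followed by $2L$-periodic extension.

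Next I would record the two identities on which everything rests. Differentiating the representation gives $V_\xi=g'(\tau+\xi)+g'(\tau-\xi)$ and $V_\tau=g'(\tau+\xi)-g'(\tau-\xi)$, so that $|V_\xi|^2+|V_\tau|^2=2|g'(\tau+\xi)|^2+2|g'(\tau-\xi)|^2$; integrating in $\xi$ over $(0,L)$ and using the $2L$-periodicity of $g'$ produces the conserved energy $E_V(\tau)=2\int_0^{2L}|g'(s)|^2\,ds=E_V(0)=\|\phi_1\|^2_{H^1_0(0,L)}+\|\psi_1\|^2_{L^2(0,L)}$. For the boundary trace, periodicity gives $g'(\tau+L)=g'(\tau-L)$, whence $V_\xi(L,\tau)=2\,g'(\tau-L)$ and
$$
\int_0^T |V_\xi(L,\tau)|^2\,d\tau = 4\int_{-L}^{T-L}|g'(s)|^2\,ds.
$$
When $T\geq 2L=\rho(F)$ the interval $(-L,T-L)$ contains a full period of the nonnegative $2L$-periodic integrand, so the right-hand side is at least $4\int_0^{2L}|g'|^2=2\,E_V(0)$; taking $C(T)=1/2$ closes the estimate.

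The main obstacle is purely the rigorous justification of the representation for merely $H^1_0\times L^2$ data: one must check that the $g'$ defined piecewise above, after $2L$-periodic extension, lies in $L^2_{\mathrm{loc}}(\R)$ and that $V=g(\tau+\xi)-g(\tau-\xi)$ is the weak solution carrying the prescribed traces — here the constraint $\phi_1(0)=\phi_1(L)=0$ built into $H^1_0$ is exactly what prevents the odd-then-periodic extension from creating spurious jumps. I would handle this by first proving the two identities for smooth compatible data and then passing to the limit by density. Alternatively, the whole statement follows from a Fourier expansion in the basis $\{\sin(n\pi\xi/L)\}$ combined with Ingham's inequality, whose spectral-gap threshold is once more precisely $T>2L=\rho(F)$.
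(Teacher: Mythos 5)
Your argument is correct, and it is worth noting that the paper itself offers no proof of Lemma \ref{Ob1} at all: the lemma is stated and then immediately followed by the proof of Theorem \ref{dirichlet}, so the authors are implicitly invoking the classical boundary observability of the fixed-endpoint string, for which $T\geq 2L=\rho(F)$ is the sharp time. Your d'Alembert computation supplies exactly the missing justification, and every step checks out: $V(L,\tau)=0$ forces $g(\tau+L)=g(\tau-L)$, hence $V_\xi(L,\tau)=g'(\tau+L)+g'(\tau-L)=2g'(\tau-L)$, and the integral over any window of length $2L$ captures a full period of $|g'|^2$, giving the inequality with $C(T)=1/2$ (indeed already for $T\geq\rho(F)$, slightly better than the stated $T>\rho(F)$). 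The regularity point you flag is the right one and is resolved exactly as you say: the zero-mean identity $\int_{-L}^{L}g'=\phi_1(L)-\phi_1(0)=0$, which is where $\phi_1\in H^1_0$ enters, guarantees that the $2L$-periodic extension of $g'$ integrates to a genuinely periodic $g$, and a density argument (or the Fourier/Ingham route you mention, which proves the same estimate with the same threshold) makes the trace identity rigorous for finite-energy data. The only cosmetic caveat is that if $\|\cdot\|_{H^1_0}$ is read as the full $H^1$ norm rather than the seminorm $\|\phi_1'\|_{L^2}$, your constant must absorb a Poincar\'e factor, which changes nothing essential.
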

\begin{proof}[\bf{Proof of Theorem~\ref{dirichlet}}]

We consider (\ref{B.Khiar})-(\ref{dir}) and state:
$$
\partial_{x}u = \partial_{\xi}V \partial_{x}\xi
+ \partial_{\tau}V \partial_{x}\tau
$$
Next we have:
$$
\partial_{x}u(a(t),t)= \partial_{\xi}V(\rho(F) /2,\tau)
\partial_{x}\xi(a(t),t)+\partial_{\tau}V(\rho(F) /2,\tau)
\partial_{x}\tau(a(t),t)
$$
Since
$$
\partial_{x}\xi = [H'(x + t)+ H'(-x + t)]/ 2 \;\; \mbox{and}
\;\; \partial_{x}\tau = [H'(x + t)- H'(-x + t)]/ 2,
$$
it follows that:
$$
|\partial_{x}u(a(t),t)|^2= |\partial_{\xi}V(\rho(F) /2,\tau)
\partial_{x}\xi(a(t),t)+\partial_{\tau}V(\rho(F) /2,\tau)
\partial_{x}\tau(a(t),t)|^2
$$
$$
=\frac{1}{4} \left\{|\partial_{\xi}V(\rho(F) /2,\tau)[H'(x + t)+
H'(-x + t)]\right\}^2.
$$
Make use of Young inequalities, (\ref{Obserneu}) is a consequence of
the inequalities (\ref{l1l2}) and (\ref{l2l1}), Lemma \ref{second}
and Lemma \ref{Ob1}.
\end{proof}

\begin{proof}[\bf{Proof of Corollary~\ref{cor}}]
Let us consider 
\begin{equation}\label{cont1}
\left\{\begin{array}{lll} \partial_{\tau}^{2}V -
\partial_{\xi}^{2}V = 0, \;\;\;\; \mbox{for} \;\;\;\; 0<\xi<\rho(F) /2, \, \tau \in \mathbb{R}, \\
V(0,\tau) = 0, \;\; V(\rho(F) /2,\tau)= 0, \;\; \tau \in \R, \\
V(\xi,0) = V_0(\xi), \;\; V_{\tau}(\xi,0) = V_1(\xi),
\;\; \xi \in (0,\rho(F)/2). \\
\end{array}\right.
\end{equation}
The system (\ref{cont1}) is exactly observable at time $\rho(F)$ that is: there exists $C>0$ such
that for all $\tau \geq \rho(F)$, we have
$$
C(T) \, \int_{0}^{T} |V_{\xi}(\rho(F) /2,\tau)|^2d\tau \geq
\|\phi_1\|^2_{H^1_0(0,\rho(F) /2)} + \|\psi_1\|^2_ {L^2(0,\rho(F)
/2)},
$$
and so the following problem
\begin{equation}\label{cont2}
\left\{\begin{array}{lll} \partial_{\tau}^{2}\tilde{V} -
\partial_{\xi}^{2}\tilde{V} = 0, \;\;\;\; \;\;\;\; 0<\xi<\rho(F) /2, \, \tau \in \mathbb{R}, \\
\tilde{V}(0,\tau) = 0, \;\; \tilde{V}(\rho(F) /2,\tau)= g(\tau), \;\; \tau \in \R, \\
\tilde{V}(\xi,0) = \tilde{V}_0(\xi), \;\; \tilde{V}_{\tau}(\xi,0) =
\tilde{V}_1(\xi),
\;\; \xi \in (0,\rho(F)/2)\\
\end{array}\right.
\end{equation}
is exactly controllable at $\rho(F)$ that is for all  $(\tilde{V}_0,
\tilde{V}_1) \in L^2(0,\rho(F) /2)\times H^{-1}(0,\rho(F) /2)$,
there exists $g \in L^2(0,\rho(F))$ such that $\tilde{V}(\xi,\tau)=0$ for all $\tau \geq \rho(F)$.\\ Moreover, $g:=V_{\xi}(\rho(F) /2,\tau)
\chi_{(0,\rho(F))}(\tau)$.

\smallskip 

So the following transformed system
\begin{equation*}
\left\{\begin{array}{lll} \partial_{t}^{2}u -
\partial_{x}^{2}u = 0, \;\;\;\; \;\;\;\; 0<x<a(t), \, t \in \mathbb{R},\\
u(0,t) = 0, \;\; u(a(t),t)= f(t), \;\; t \in \R, \\
u(x,0) = u_0(x), \;\; u_{t}(x,0) =u_1(x),
\;\; x \in (0,a(0))\\
\end{array}\right.
\end{equation*}
is exactly controllable with a time of control $\displaystyle{
T:=|e^{\frac{\rho(F)-h_2}{h_0}}-h_1|}$ and a control $f(t)$ is given by $f(t)= g \left(\frac{H(a(t)+t)+H(-a(t)+t)}{2} \right).$
\end{proof}

\section{Further comments: The quasi periodic case}
One can try to generalize the previous results to the case when $a$
is no longer periodic but has some sort of quasiperiodicity
\footnote{A function $a(t), t \in \R$ is called quasiperiodic with
basic frequencies $\omega = (\omega_1 , ..., \omega_m) \in \R^{m}$
(briefly $2\pi/\omega-$q.p) if there exists a continuous function
$\hat{g}(\theta), \theta = (\theta_1 , ..., \theta_m) \in \R^{m}$
that is $2\pi$-periodic in each $\theta_i , i=1, ..., m$ such that
$a(t) = \hat{a}(\omega t)$ holds. $\hat{g}(\theta)$, is called the
corresponding function and $\displaystyle\frac{2\pi}{\omega} =
(\displaystyle\frac{2\pi}{\omega_1} , ...,
\displaystyle\frac{2\pi}{\omega_m})$ the basic periods of $a$.}.\\
The problem is much more complicated, since there is no rotation
number. However, in \cite{Ya 5} the author uses a weaker notion of
upper (resp. lower) rotation number of $F$ at every point $x$ as
follows:
$$
\overline{\rho}(F) = \displaystyle\limsup_{n\to
+\infty}\frac{F^n(x)-x}{n}
$$
$$ \mbox{ (resp. }
\underline{\rho}(F) =
\displaystyle\liminf_{n\to+\infty}\frac{F^n(x)-x}{n} ).
$$
As a consequence, it is shown that under the same Diophantine
condition \cite{Ca}, \cite{Lan} satisfied by $\overline{\rho}(F)$
(resp. $\underline{\rho}(F)$), the rotation number of $F$ exists and
coincides with the lower (resp. upper) rotation number.
\begin{lemma}\label{reduction}
Assume that $a(t)$ is an $\eta-$q.p function, $\hat{a}(\theta)$ is
real analytic and satisfy $|\hat{a'}(\theta)|<1$ for $\eta, \theta
\in \R^m$ and set $\beta = (\displaystyle\frac{2\pi}{\eta_1} ,
...,\displaystyle\frac{2\pi}{\eta_m})$. Assume also that there
exists $C_0>0$ depending on $\beta$ such that $|(k,\beta) + \pi
l/\overline{\rho}(F)|
> \displaystyle\frac{C_0}{|k|^{m+1}}$. Then, there exists a real
analytic function $H(\xi) = \xi+h(\xi)$, where $h(\xi)$ is an
$\eta$-q.p. function, such that
\begin{equation}\label{red1}
H^{-1} \circ F \circ H(\xi) = \xi + \overline{\rho}(F).
\end{equation}
\end{lemma}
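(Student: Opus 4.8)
The plan is to solve the conjugacy equation $F\circ H=H\circ R_{\overline{\rho}(F)}$, where $R_{\overline{\rho}(F)}(\xi)=\xi+\overline{\rho}(F)$, by an analytic KAM (Newton-type) iteration, following the reduction theorem \cite{Yoc} adapted to the quasi-periodic framework of \cite{Ya 5}. Since $F=(I+a)\circ(I-a)^{-1}$ inherits a quasi-periodic structure from $a$, I would write $F=I+\varphi$ with $\varphi$ an $\eta$-q.p. function and $H=I+h$; then $F(H(\xi))=H(\xi+\overline{\rho}(F))$ is equivalent to the nonlinear functional equation
\begin{equation*}
h(\xi+\overline{\rho}(F))-h(\xi)=\varphi(\xi+h(\xi))-\overline{\rho}(F).
\end{equation*}
The first point to settle is that $\overline{\rho}(F)$ is the correct translation number: under the stated Diophantine hypothesis the upper rotation number coincides with the genuine rotation number (the result cited just before the statement), so the mean of the right-hand side vanishes at the linearized level, no frequency correction is needed along the iteration, and $\overline{\rho}(F)$ may be held fixed throughout.

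The core step is the linearized (cohomological) equation $h(\xi+\overline{\rho}(F))-h(\xi)=\psi(\xi)$ for a given zero-mean $\eta$-q.p. datum $\psi$. Expanding in the multidimensional Fourier series $\psi(\xi)=\sum_{k\in\Z^m}\hat\psi_k\,e^{\,i(k,\beta)\xi}$ with $\beta=(2\pi/\eta_1,\dots,2\pi/\eta_m)$, the formal solution is $h(\xi)=\sum_{k\in\Z^m}\hat h_k\,e^{\,i(k,\beta)\xi}$ with $\hat h_k=\hat\psi_k/(e^{\,i(k,\beta)\overline{\rho}(F)}-1)$. The moduli of the denominators are bounded below, up to a constant, by the quantity $|(k,\beta)+\pi l/\overline{\rho}(F)|$ (minimized over $l\in\Z$) appearing in the hypothesis. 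The Diophantine bound $|(k,\beta)+\pi l/\overline{\rho}(F)|>C_0/|k|^{m+1}$ therefore tames these small divisors, and, combined with the exponential decay of the $\hat\psi_k$ coming from analyticity of $\hat a$, the bound for $h$ on a slightly thinner complex strip picks up only a fixed negative power of the width decrement.

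I would then run the quadratically convergent iteration in Banach spaces of $\eta$-q.p. functions admitting a bounded analytic extension to strips $\{|\mathrm{Im}\,\theta|<r\}$, which is legitimate since $\hat a$, hence $\varphi$, is real analytic. Starting from $H_0=I$, at each stage one solves the cohomological equation to correct the conjugacy and reduce the error from size $\epsilon$ to size $O(\epsilon^2)$, at the cost of shrinking the strip width by some $\delta_n$. Choosing $\delta_n$ with $\sum_n\delta_n<r$ while the quadratic recursion $\epsilon_{n+1}\le C\,\delta_n^{-p}\,\epsilon_n^2$ still forces $\epsilon_n\to0$ super-exponentially, the composed maps $H_n$ converge in the analytic norm on the limiting strip to a real-analytic $H=I+h$ with $h$ an $\eta$-q.p. function satisfying $F\circ H=H\circ R_{\overline{\rho}(F)}$, which is (\ref{red1}).

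The hard part will be the small-divisor bookkeeping in this last step: one must verify that the polynomial loss $|k|^{m+1}$ from the Diophantine condition, accumulated over all iterations through the shrinking strip widths $\delta_n$, is dominated by the super-exponential gain of the Newton scheme. This is the standard but delicate convergence estimate of analytic KAM theory, and it is precisely here that the exponent $m+1$ in the hypothesis is used.
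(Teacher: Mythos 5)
The first thing to say is that the paper contains no proof of this lemma: it is imported verbatim from Yamaguchi \cite{Ya 5}, and the surrounding text only motivates the introduction of the upper and lower rotation numbers. So there is no in-paper argument to measure yours against; your analytic KAM outline is the standard route to such reduction theorems and is, in spirit, the method of the cited source.

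As a proof, however, the proposal has two genuine gaps. First, the Newton scheme you describe is local: the recursion $\epsilon_{n+1}\le C\,\delta_n^{-p}\,\epsilon_n^2$ drives $\epsilon_n\to 0$ only if the initial error $\epsilon_0=\|F-R_{\overline{\rho}(F)}\|_{r}$ is small compared with $C_0$ and the initial strip width $r$. The statement assumes only analyticity and $|\hat{a}'(\theta)|<1$, with no smallness of $a$, so your argument does not reach the stated generality; one must either add a smallness hypothesis (as in Yamaguchi's actual theorems, where the boundary is a small quasi-periodic perturbation of a constant) or invoke a global linearization argument of Herman--Yoccoz type, which is not available for genuinely quasi-periodic $F$ and which you do not supply. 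Second, the claim that ``the mean of the right-hand side vanishes at the linearized level, so no frequency correction is needed'' is precisely the point that needs proof. The mean of $\varphi(\xi+h(\xi))-\overline{\rho}(F)$ does not vanish identically; in the periodic case one shows the leftover constant is quadratically small by using the invariance of the rotation number under conjugation, and in the quasi-periodic case the existence and invariance of a rotation number is itself the delicate issue --- that is exactly why $\overline{\rho}(F)$ and $\underline{\rho}(F)$ are introduced and why the Diophantine condition is needed to show they coincide. Asserting this step rather than proving it assumes the hardest part of the lemma.
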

\begin{rmk}
Thanks to Lemma \ref{reduction}, Theorem \ref{dirichlet}
is easily extended by similar arguments.
\end{rmk}
\begin{rmk}
Generalizations of the foregoing results may be obtained in a 3D
context, assuming that solutions and given data are functions of
only $r=(x^2+y^2+z^2)^{1/2}$ with respect to the space variables.
Let $\Omega$ be the domain $0<r<a(t)$ and consider,
\begin{equation}
\left\{\begin{array}{lll} u_{tt}-u_{rr}-(2/r)u_r = 0 \;\;\;\;
\mbox{in} \;\;\;\; {\Omega}, \, t > 0,\\
\mbox{with boundary conditions} \;\;\;\; u(0,t)=u(a(t),t)=0, \, t > 0,\\
\mbox{and initial conditions} \;\;\;\; u(r,0)=\phi(r), u_t(r,0)
=\psi(r), \,  0 <r < a(0).
\end{array}
\right.
\end{equation}
Introducing the transformation $u(r,t)=w(r,t)/r$ leads to the
problem:
\begin{equation}
\left\{\begin{array}{lll} w_{tt}=w_{rr} \;\;\;\; 0< r<a(t), \, t > 0,\\
w(0,t)=w(a(t),t)=0, \, t > 0, \\\\
w(r,0)=r\phi(r), \;\; w_t(r,0)= r\psi(r), \, 0 < r < a(0). 
\end{array}\right.
\end{equation}
\end{rmk}
\section{Appendix}
In this section, we treat the Dirichlet observability.
\begin{thm}[Dirichlet observability]\label{Neumann}
Under the assumptions \ref{a1} and \ref{a2}, suppose moreover that $l_1<l_2,$ there exist $ T, \;
C^{*}> 0$ such that for all solution $u$ of the system
(\ref{B.Khiar}) with the mixed boundary condition (\ref{neu}) and
initial data $(\phi,\psi) \in H^1_l(0,a(0)) \times L^2(0,a(0)),$ we
have
\begin{equation}\label{Obser}
\int_{0}^{T} \left|u_{t}(a(t),t) \right|^2 \, dt \geq C^{*} \,
\left(\|\phi\|^2_{H^1_l(0,a(0))} + \|\psi\|^2_ {L^2(0,a(0))} \right).
\end{equation}
\end{thm}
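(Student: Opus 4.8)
The plan is to run, almost verbatim, the argument used for Theorem~\ref{dirichlet}: transport the mixed problem (\ref{B.Khiar})--(\ref{neu}) to the fixed strip $[0,\rho(F)/2]\times\mathbb{R}$ through the change of variables (\ref{trans}), prove a boundary observability estimate there, and pull it back with the energy equivalence of Lemma~\ref{second}. First I would record how the two boundary conditions transform. Since $\tau\pm\xi=H(\pm x+t)$, the condition $u(0,t)=0$ again becomes the Dirichlet condition $V(0,\tau)=0$, while $u_x(a(t),t)=0$, written as $u_x=V_\xi\xi_x+V_\tau\tau_x$ and divided by $H'(a(t)+t)+H'(-a(t)+t)$, becomes the relation $V_\xi(\rho(F)/2,\tau)+b(t)\,V_\tau(\rho(F)/2,\tau)=0$, with $b$ the function of Assumption~\ref{a2}. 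Thus on the fixed domain $V$ solves a wave equation with a Dirichlet end at $\xi=0$ and a (time dependent) Neumann-type end at $\xi=\rho(F)/2$.

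Next I would reduce the observed quantity. Writing $u_t(a(t),t)=V_\xi\xi_t+V_\tau\tau_t$ at $x=a(t)$ and eliminating $V_\xi$ by the transformed boundary relation yields the clean identity $u_t(a(t),t)=\frac{2H'(a(t)+t)\,H'(-a(t)+t)}{H'(a(t)+t)+H'(-a(t)+t)}\,V_\tau(\rho(F)/2,\tau)$, whose coefficient is the harmonic mean of $H'(a(t)+t)$ and $H'(-a(t)+t)$ and therefore lies between $\lambda_1$ and $\lambda_2$ by Assumption~\ref{a1} (equivalently between the bounds (\ref{l1l2}) when $l_1<l_2$). Hence $|u_t(a(t),t)|^2$ is comparable to $|V_\tau(\rho(F)/2,\tau)|^2$. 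The time substitution $\tau(t)=(H(a(t)+t)+H(-a(t)+t))/2$ has derivative $\frac{1}{2}[H'(a(t)+t)(1+a'(t))+H'(-a(t)+t)(1-a'(t))]$, which is bounded above and below because $|a'|<1$ and $H'\in[\lambda_1,\lambda_2]$; so $\int_0^T|u_t(a(t),t)|^2\,dt$ is bounded below by a constant times $\int_{\tau(0)}^{\tau(T)}|V_\tau(\rho(F)/2,\tau)|^2\,d\tau$, with $\tau(T)-\tau(0)>\rho(F)$ once $T$ is large enough.

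It then remains to prove the boundary observability on the fixed strip, namely a Neumann analogue of Lemma~\ref{Ob1}: for $T>\rho(F)$ there is $C>0$ with $\int|V_\tau(\rho(F)/2,\tau)|^2\,d\tau\ge C\,E_V$, after which Lemma~\ref{second} converts $E_V$ into $\|\phi\|^2_{H^1_l}+\|\psi\|^2_{L^2}$ and (\ref{Obser}) follows. \textbf{This last step is the real obstacle.} Unlike the Dirichlet case, the transformed condition is not the homogeneous Neumann one but the weighted, dissipative condition $V_\xi=-b\,V_\tau$; the flux identity gives $\frac{d}{d\tau}E_V=-2\,b\,|V_\tau(\rho(F)/2,\tau)|^2$, so $E_V$ is no longer conserved and the classical Neumann observability cannot be quoted verbatim. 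The two-sided bound $c_1\le b\le c_2$ of Assumption~\ref{a2} is exactly what is needed here: the lower bound forces a definite fraction of the energy to be radiated through the observed end over a window longer than $\rho(F)$, while the upper bound keeps the observation from being drowned by the weight, and together they furnish a uniform observability constant. I would establish this either through the d'Alembert representation $V(\xi,\tau)=\tilde f(\tau+\xi)-\tilde f(\tau-\xi)$ with $\tilde f=f\circ H^{-1}$ --- reducing the estimate to the $F$-invariance $|f'\circ F|=|f'|$ coming from the boundary relation $f'\circ F=-f'$ --- or by a multiplier/energy argument on $[0,\rho(F)/2]$ tracking the dissipated flux, the bounds on $b$ entering decisively in both routes.
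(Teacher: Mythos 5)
Your strategy coincides with the paper's: transport (\ref{B.Khiar})--(\ref{neu}) by $\Phi$ to the fixed strip, where the mixed condition becomes the dissipative relation $V_\xi(\rho(F)/2,\tau)+b\,V_\tau(\rho(F)/2,\tau)=0$, prove boundary observability there, and pull back with Lemma~\ref{second}. Your reduction of the observed quantity is in fact cleaner than the paper's: the identity $u_t(a(t),t)=\frac{2H'(a(t)+t)H'(-a(t)+t)}{H'(a(t)+t)+H'(-a(t)+t)}\,V_\tau(\rho(F)/2,\tau)$, with harmonic-mean coefficient pinned between $\lambda_1$ and $\lambda_2$, eliminates $V_\xi$ outright and so requires only the $V_\tau$-observability (\ref{obes2}), whereas the paper keeps both terms and juggles them with Young's inequality, needing both (\ref{obes1}) and (\ref{obes2}).

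The problem is the step you yourself call ``the real obstacle'': the fixed-domain estimate $\int_0^T|V_\tau(\rho(F)/2,\tau)|^2\,d\tau\geq C\,E_V(0)$ is the mathematical core of the theorem, and you only name two candidate routes without executing either, so the proof is incomplete precisely where it matters. The paper carries out your second route in two lemmas. Lemma~\ref{exv} establishes exponential decay $E_V(\tau)\leq Ce^{-\omega\tau}E_V(0)$ via the perturbed energy
\[
E_1(\tau)=\frac12\int\bigl(V_\xi^2+V_\tau^2\bigr)\,d\xi+\delta\int \xi\,V_\xi V_\tau\,d\xi,
\]
whose $\tau$-derivative is a negative multiple of $E_V$ plus a boundary term of the form $\bigl[O(\delta)(1+b^2)-b\bigr]V_\tau^2(\rho(F)/2,\tau)$; the lower bound $b\geq c_1>0$ of Assumption~\ref{a2} (concretely (\ref{btau}), which is where the hypothesis $l_1<l_2$ enters) makes this boundary term nonpositive for $\delta$ small, giving $E_1'\leq-\omega E_1$. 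Lemma~\ref{Obser1bb} then combines the flux identity $E_V(T)-E_V(0)=-\int_0^Tb\,|V_\tau|^2\,d\tau$ with the upper bound on $b$ and the decay to obtain $\int_0^T|V_\tau|^2\,d\tau\geq C\,E_V(0)\bigl(1-e^{-\omega T}\bigr)$. Your heuristic that ``the lower bound forces energy out and the upper bound keeps the observation meaningful'' is correct, but it does not by itself produce a uniform constant: you must supply the Lyapunov functional (or an equivalent sideways-energy/d'Alembert argument) and the smallness condition on $\delta$ to close the estimate.
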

\begin{rmk}
Using $\Phi$ given by (\ref{trans}), we transform the system (\ref{B.Khiar})-(\ref{neu})
into:
\begin{equation}\label{Beni.Benou}
\left\{\begin{array}{lll} \partial_{\tau}^{2}V -
\partial_{\xi}^{2}V = 0, \;\;\;\; \mbox{for} \;\;\;\; 0<\xi<\rho(F) /2, \, \tau \in \mathbb{R},\\
V(0,\tau) = 0, \;\; V_{\xi}(\rho(F) /2,\tau)+b(t(\tau))V_{\tau}(\rho(F) /2,\tau) = 0, \;\; \tau \in \R \\
V(\xi,0) = \phi_2(\xi), \;\; V_{\tau}(\xi,0) = \psi_2(\xi),
\;\; \xi \in (0,\omega/2). 
\end{array}\right.
\end{equation}

\end{rmk}
For the proof of Theorem~\ref{Neumann}, we need the following
lemmas.
\begin{lemma}\label{exv}
Assume that $l_1<l_2$, then there exist positive constants $C$ and
$\omega$ such that
\begin{equation}\label{expv}
E_{V}(\tau)\leq C e^{-\omega \tau} E_V(0).
\end{equation}
\end{lemma}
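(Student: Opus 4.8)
The plan is to prove exponential decay by the classical multiplier method for a string with dissipative boundary feedback, in three stages: first show that the energy $E_V$ is non-increasing, then establish an observability-type inequality over a fixed time window, and finally iterate. Throughout I write $\rho(F)/2$ for the length of the transformed domain and work with the system (\ref{Beni.Benou}).

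First I would differentiate $E_V$ in $\tau$ and substitute the equation $V_{\tau\tau}=V_{\xi\xi}$, which turns the integrand into a perfect $\xi$-derivative and gives
$$
\frac{d}{d\tau}E_V(\tau) = 2\big[V_\xi V_\tau\big]_{\xi=0}^{\xi=\rho(F)/2}.
$$
The contribution at $\xi=0$ vanishes because $V(0,\tau)=0$ forces $V_\tau(0,\tau)=0$, while at $\xi=\rho(F)/2$ the feedback law $V_\xi=-b(t(\tau))V_\tau$ yields $\frac{d}{d\tau}E_V(\tau)=-2\,b(t(\tau))\,|V_\tau(\rho(F)/2,\tau)|^2$. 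Here the hypothesis $l_1<l_2$ enters decisively: through (\ref{btau}) (equivalently Assumption \ref{a2}) it guarantees $0<c_1\le b(t(\tau))\le c_2$, so the boundary term is genuinely dissipative, $E_V$ is non-increasing, and the dissipated energy over any interval $[S,T]$ is comparable to $\int_S^T|V_\tau(\rho(F)/2,\tau)|^2\,d\tau$.

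Next I would apply the multiplier $\xi V_\xi$. Multiplying the equation by $\xi V_\xi$, integrating over $(0,\rho(F)/2)\times(S,T)$, and integrating by parts in both variables produces the identity
$$
\tfrac{1}{2}\int_S^T E_V(\tau)\,d\tau
= \frac{\rho(F)}{4}\int_S^T\big(|V_\tau|^2+|V_\xi|^2\big)\big|_{\xi=\rho(F)/2}\,d\tau
- \Big[\int_0^{\rho(F)/2}\xi\,V_\tau V_\xi\,d\xi\Big]_S^T,
$$
where the weight $\xi$ together with the Dirichlet condition annihilates all boundary contributions at $\xi=0$. On the right I would invoke the feedback law to replace $|V_\xi|^2$ by $b^2|V_\tau|^2$ at the observed end, bound that boundary integral by the dissipated energy $E_V(S)-E_V(T)$ (using step one and $c_1\le b\le c_2$), and control the cross term $\int_0^{\rho(F)/2}\xi\,V_\tau V_\xi\,d\xi$ by $\frac{\rho(F)}{4}E_V$ via Young's inequality.

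Combining these with the monotonicity bound $\int_S^T E_V\,d\tau\ge (T-S)E_V(T)$ gives $(T-S)E_V(T)\le K\,E_V(S)$ for a constant $K$ depending only on $\rho(F),c_1,c_2$. Choosing a window length $T_0=T-S$ so large that $\gamma:=K/T_0<1$ yields $E_V(S+T_0)\le\gamma\,E_V(S)$, and this holds with the \emph{same} $\gamma$ for every $S\ge 0$ because the bounds on $b$ are uniform in time. Iterating over consecutive windows gives $E_V(nT_0)\le\gamma^n E_V(0)$, and monotonicity fills the gaps, producing (\ref{expv}) with $\omega=-\frac{1}{T_0}\ln\gamma>0$ and $C=1/\gamma$. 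I expect the main obstacle to be exactly this uniformity: since $b(t(\tau))$ makes the system non-autonomous, no semigroup is available, so the window estimate must be truly independent of the initial time $S$ — and this is precisely what the time-independent bounds $c_1\le b\le c_2$ supply. A secondary technical point is that all the integrations by parts should first be carried out for smooth solutions and then extended to the finite-energy class by density.
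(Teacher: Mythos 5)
Your argument is correct, and it rests on the same two ingredients as the paper's proof: the uniform positivity $0<c_1\le b\le c_2$ coming from $l_1<l_2$ via (\ref{btau}), and the multiplier $\xi V_\xi$. The packaging, however, is genuinely different. The paper forms the perturbed Lyapunov functional $E_1(\tau)=E_V(\tau)+\delta\int \xi V_\xi V_\tau\,d\xi$, checks its equivalence with $E_V$ for $\delta$ small, and differentiates once to obtain the differential inequality $E_1'\le-\omega E_1$, from which (\ref{expv}) follows immediately by Gronwall; the cross term $\int\xi V_\xi V_\tau\,d\xi$ there plays exactly the role of your multiplier identity, but is never integrated in time. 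You instead integrate the multiplier identity over a window $[S,T]$, convert it into the contraction $E_V(S+T_0)\le\gamma E_V(S)$ with $\gamma<1$, and iterate, recovering the exponential rate as $\omega=-\ln\gamma/T_0$. The Lyapunov route is shorter and delivers an explicit rate from a single choice of $\delta$; your window-plus-iteration route is more robust (it survives situations where one only controls time averages of the dissipation rather than its pointwise sign) at the cost of the extra bookkeeping needed to verify that the window constant $K$ is independent of $S$ — which, as you correctly note, is exactly what the time-independent bounds on $b$ provide. Your closing remarks on non-autonomy and on first arguing for smooth solutions are both apt; the paper passes over these points silently.
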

\begin{proof}
Define the Lyapunov function:
$$E_1(\tau) = \frac{1}{2}\int^{\rho(F)}_0[V_{\xi}^2(\xi,\tau)+ V_{\tau}^2(\xi,\tau)]d\xi+
\delta\int^{\rho(F)}_0 \xi
V_{\xi}(\xi,\tau)V_{\tau}(\xi,\tau)d\xi.$$ We obtain for
$\delta<\frac{1}{\rho(F)},$
\begin{equation}\label{E1E}
0<(1 -\delta\rho(F))E_{V}(\tau) \leq E_1(\tau) \leq (1
+\delta\rho(F))E_{V}(\tau).
\end{equation}
We derive $E_1$ with respect to $\tau$, we get
\begin{eqnarray*}
E_1'(\tau)&=&
[V_{\xi}V_{\tau}]^{\xi=\rho(F)}_{\xi=0}-\frac{\delta}{2}\int^{\rho(F)}_0
[V_{\xi}^2(\xi,\tau)+ V_{\tau}^2(\xi,\tau)]d\xi+\frac{\delta}{2}[\xi
(V_{\xi}^2 +
V_{\tau}^2)]^{\xi=\rho(F)}_{\xi=0}\\
&=&[\frac{\delta}{2}(1+b(t(\tau))^2)-b(t(\tau))]V_{\tau}^2(\rho(F),\tau)-
\frac{\delta}{2}\int^{\rho(F)}_0[V_{\xi}^2(\xi,\tau)+
V_{\tau}^2(\xi,\tau)]d\xi.
\end{eqnarray*}
We choose $\delta$ small enough, taking into account (\ref{btau})
and (\ref{E1E}) we get
$$ E_1'(\tau)\leq -\omega E_1(\tau). $$ The proof is complete.
\end{proof}
\begin{lemma}\label{Obser1bb}
If $T>\rho(F)$, then there exists $C(T)>0$ such that for all
$(\phi_2,\psi_2) \in H_l^1(0,\rho(F)/2) \times L^2(0,\rho(F)/2)$ we
have
\begin{equation}\label{obes1}
C(T) \, \int_{0}^{T} \left|V_{\xi}(\rho(F) /2,\tau)\right|^2 \,
d\tau \geq \|\phi_2\|^2_{H_l^1(0,\rho(F)/2)} +
\|\psi_2\|^2_{L^2(0,\rho(F) /2)},
\end{equation}
and
\begin{equation}\label{obes2}
C(T) \, \int_{0}^{T} \left|V_{\tau}(\rho(F) /2,\tau)\right|^2 \,
d\tau \geq \|\phi_2\|^2_{H_l^1(0,\rho(F)/2)} +
\|\psi_2\|^2_{L^2(0,\rho(F) /2)}.
\end{equation}
\end{lemma}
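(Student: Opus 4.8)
The plan is to read off the estimate directly from the d'Alembert representation already used in the paper, exploiting the reflection law induced by the dissipative boundary condition. Write $L:=\rho(F)/2$. Since $V$ solves the constant-coefficient wave equation on $(0,L)$ with $V(0,\tau)=0$, every solution takes the form $V(\xi,\tau)=f(\tau+\xi)-f(\tau-\xi)$ for some $f\in H^1_{\mathrm{loc}}(\R)$, where $f'$ is pinned down on $[-L,L]$ by the initial data via $f'(\xi)=(\phi_2'(\xi)+\psi_2(\xi))/2$ and $f'(-\xi)=(\phi_2'(\xi)-\psi_2(\xi))/2$ for $\xi\in[0,L]$. A direct computation gives $V_\xi^2+V_\tau^2=2\bigl(f'(\tau+\xi)^2+f'(\tau-\xi)^2\bigr)$, hence
\[
E_V(\tau)=2\int_{\tau-L}^{\tau+L}f'(s)^2\,ds,\qquad E_V(0)=2\int_{-L}^{L}f'(s)^2\,ds=\|\phi_2\|^2_{H^1_l(0,L)}+\|\psi_2\|^2_{L^2(0,L)},
\]
the last identity being the equivalence between the energy and the data norm (with the Poincaré seminorm convention at $\xi=0$).

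The key step is to use the boundary condition $V_\xi(L,\tau)+b(t(\tau))V_\tau(L,\tau)=0$ to eliminate the cross term that would otherwise spoil the observation. Writing $b=b(t(\tau))$ and $r(\tau):=(1-b)/(1+b)$, the boundary relation becomes $f'(\tau+L)=-r(\tau)f'(\tau-L)$, so that
\[
V_\xi(L,\tau)=(1-r(\tau))\,f'(\tau-L),\qquad V_\tau(L,\tau)=-(1+r(\tau))\,f'(\tau-L).
\]
By Assumption~\ref{a2}, $b$ is bounded above and below by positive constants, whence $1-r(\tau)=2b/(1+b)$ and $1+r(\tau)=2/(1+b)$ are both bounded below by a positive constant $c_0$ uniformly in $\tau$. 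In particular the two observations are pointwise comparable (directly, $|V_\xi(L,\tau)|=b\,|V_\tau(L,\tau)|$), so (\ref{obes1}) and (\ref{obes2}) are equivalent and it suffices to prove one of them.

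I would then conclude by a change of variable:
\[
\int_0^T|V_\xi(L,\tau)|^2\,d\tau=\int_0^T(1-r(\tau))^2f'(\tau-L)^2\,d\tau\geq c_0^2\int_{-L}^{T-L}f'(s)^2\,ds.
\]
Because $T>\rho(F)=2L$ we have $T-L>L$, so $[-L,T-L]$ contains $[-L,L]$ and the right-hand side is at least $c_0^2\int_{-L}^{L}f'^2=(c_0^2/2)\,E_V(0)$; taking $C(T)=2/c_0^2$ yields (\ref{obes1}), and the equivalence above yields (\ref{obes2}). Finally these identities are first derived for smooth data and extended to $(\phi_2,\psi_2)\in H^1_l(0,L)\times L^2(0,L)$ by density. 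The one genuinely delicate point is the control of the reflection coefficient: the whole argument rests on $1\pm r(\tau)$ staying bounded away from zero uniformly in $\tau$, which is precisely where the two-sided bound on $b$ in Assumption~\ref{a2} (equivalently the hypothesis $l_1<l_2$ through (\ref{btau})) enters, while the sharp time $T>\rho(F)$ is dictated by the single round-trip length $2L$ needed for $[-L,T-L]$ to cover the support $[-L,L]$ of the initial data.
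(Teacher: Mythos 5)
Your proof is correct, but it takes a genuinely different route from the paper. The paper's own argument is ``soft'': it combines the energy dissipation identity $E_V(T)-E_V(0)=-\int_0^T b(t(\tau))|V_\tau(\rho(F)/2,\tau)|^2\,d\tau$ for (\ref{Beni.Benou}) with the exponential decay $E_V(\tau)\le Ce^{-\omega\tau}E_V(0)$ of Lemma \ref{exv} (itself proved by a Lyapunov functional with the multiplier $\xi V_\xi V_\tau$), obtaining $\int_0^T|V_\tau|^2\gtrsim E_V(0)(1-Ce^{-\omega T})$ and then passing between $V_\tau$ and $V_\xi$ exactly as you do, via the boundary relation and the two-sided bound (\ref{btau}) on $b$. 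You instead work directly with the d'Alembert representation $V=f(\tau+\xi)-f(\tau-\xi)$, convert the dissipative boundary condition into the reflection law $f'(\tau+L)=-r(\tau)f'(\tau-L)$ with $r=(1-b)/(1+b)$, and read off the observability by a change of variables along characteristics. Your computations check out: $1-r=2b/(1+b)$ and $1+r=2/(1+b)$ are indeed bounded below under Assumption \ref{a2}, and $E_V(0)=2\int_{-L}^{L}f'^2$ matches the data norm. What your approach buys is that it is self-contained (no need for Lemma \ref{exv}), gives explicit constants, and actually justifies the stated threshold $T>\rho(F)$ cleanly: the paper's decay-based lower bound $E_V(0)(1-Ce^{-\omega T})$ is only manifestly positive once $Ce^{-\omega T}<1$, which may require $T$ larger than $\rho(F)$, so on this point your argument is sharper than the one in the text. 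What the paper's approach buys is robustness: the multiplier/Lyapunov route does not rely on an explicit travelling-wave representation and would survive perturbations (variable coefficients, the quasi-periodic setting of Section 3) where the exact reflection law is unavailable. One small point of care in your writeup: the identity $E_V(\tau)=2\int_{\tau-L}^{\tau+L}f'^2$ presupposes that $f'$ has been extended beyond $[-L,L]$ by iterating the reflection law, which is legitimate but worth saying explicitly since for $\tau\in(2L,T)$ the values $f'(\tau-L)$ are themselves reflected data rather than initial data; your final inequality only uses the inclusion $[-L,L]\subset[-L,T-L]$, so this does not affect the conclusion.
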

\begin{proof}
The energy identity for the system (\ref{Beni.Benou}) gives :
$$ E_V(T)-E_V(0)=-\int_{o}^{T}b(t(\tau))|V_{\tau}(\rho(F)/2,\tau)|^2d\tau.$$
Using (\ref{btau}) and (\ref{expv}), we obtain
\begin{eqnarray*}
\int_{0}^{T} \left|V_{\tau}(\rho(F) /2,\tau)\right|^2 \, d\tau &\geq
&
C \int_{0}^{T} b(t(\tau))\left|V_{\tau}(\rho(F) /2,\tau)\right|^2 \, d\tau \\
&\geq & C( E_V(0)- E_V(T))\\
&\geq & CE_V(0)(1- e^{-\omega T}).
\end{eqnarray*}
This permit to conclude the second inequality in Lemma \ref{Obser1bb}. \\
For the first inequality, it suffices to use (\ref{Beni.Benou}) and
(\ref{btau}).
\end{proof}
\begin{proof}[\bf{Proof of Theorem~\ref{Neumann}}]
For the proof of (\ref{Obser}), we state as above:
$$
\partial_{t}u = \partial_{\xi}V \partial_{t}\xi
+ \partial_{\tau}V \partial_{t}\tau.
$$
Next we have:
\begin{eqnarray*}
|\partial_{t}u(a(t),t)|^2
 &=&\frac{1}{4} \{|\partial_{\xi}V(\rho(F) /2,\tau)[H'(x + t) - H'(-x +
t)]
\\
&+&\partial_{\tau}V(\rho(F) /2,\tau) [H'(x + t) + H'(-x + t)]\}^2.
\end{eqnarray*}
Make use of Young inequalities, Lemma \ref{second}, (\ref{obes1}),
(\ref{obes2}) and (\ref{l1l2}), we obtain the desired result.
\end{proof}

\end{document}